  \newtheorem{theorem}{Theorem}[section]
\newtheorem{lemma}[theorem]{Lemma}
\theoremstyle{definition}
\def\W{{\Omega}}
\def\la{{\langle}}
\def\ra{{\rangle}}
\def\char{{\rm char \,}}
\def\a{\alpha}
\def\b{\beta}
\def\e{\varepsilon}
\renewcommand{\leq}{\leqslant}
\renewcommand{\geq}{\geqslant}
\newcommand{\lcs}[2]{#1^{#2}}
\newcommand{\F}{\mathbb F}
\newcommand{\gr}[1]{{\sf Gr}(#1)}
\newcommand{\aug}[1]{\W(#1)}
\newcommand{\augp}[2]{\Omega^{#2}(#1)}
\newcommand{\uea}[1]{U(#1)}
\begin{document}

\title[Universal enveloping algebras of nilpotent Lie algebras]{The isomorphism problem for universal enveloping algebras of
nilpotent Lie algebras}
\author{Csaba Schneider and Hamid Usefi}
\address[Schneider]{Centro de \'Algebra da Universidade de Lisboa\\
Av. Prof. Gama Pinto, 2\\ 1649-003 Lisboa\\ Portugal\hfill\break
Email: csaba.schneider@gmail.com\hfill\break
WWW: www.sztaki.hu/$\sim$schneider}
\address[Schneider]{Informatics Research Laboratory\\
Computer and Automation Research Institute\\
1518 Budapest Pf.\ 63\\
Hungary}
\address[Usefi]{Department of Mathematics\\
University of British Columbia\\
1984 Mathematics Road\\
Vancouver, BC, Canada, V6T 1Z2\hfill\break
Email: usefi@math.ubc.ca\hfill\break
WWW: www.math.ubc.ca/$\sim$usefi}

\thanks{The first author was supported by the FCT project
PTDC/MAT/101993/2008 and by the Hungarian
Scientific Research Fund (OTKA) grant~72845. The second author is supported by an NSERC PDF}

\begin{abstract}
In this paper we study the isomorphism problem for the
universal enveloping algebras of nilpotent Lie algebras. We prove that
if the characteristic of the underlying field is not~2 or~3, then
the isomorphism type of a nilpotent Lie algebra of dimension at most~6 is
determined by the isomorphism type of its universal enveloping algebra. Examples show that the
restriction on the characteristic is necessary.
\end{abstract}


\maketitle

\section{Introduction}\label{intro}
In this paper we examine the isomorphism problem for universal
enveloping algebras of Lie algebras. It is known that two non-isomorphic Lie
algebras may have isomorphic universal enveloping algebras; see for instance
\cite[Example~A]{ru}. However, all such known examples require that the
 characteristic
of the underlying field is a prime.  In this paper we focus on
nilpotent Lie algebras and prove the following main result.

\begin{theorem}\label{main}
Suppose that $\F$ is a field such that $\char\,\F\not\in\{2,3\}$. Then
the isomorphism type of a nilpotent Lie algebra of dimension at most~$6$
over~$\F$ is determined by the isomorphism type of its universal
enveloping algebra.
\end{theorem}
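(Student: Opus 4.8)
The plan is to distill from the associative algebra $\uea L$ a list of invariants of the Lie algebra $L$, and then to combine these with the classification of nilpotent Lie algebras of dimension at most~$6$ over an arbitrary field to reduce the theorem to a finite case analysis.

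First I would pass to the augmentation filtration. If $\phi\colon\uea{L_1}\to\uea{L_2}$ is an algebra isomorphism, then $\phi(\aug{L_1})$ is a maximal ideal of $\uea{L_2}$ with residue field~$\F$, hence equals $\theta(\aug{L_2})$ for one of the automorphisms $\theta$ of $\uea{L_2}$ that translate $v\mapsto v+\ell(v)\cdot 1$ for a linear form $\ell$ on $L_2/L_2'$; replacing $\phi$ by $\theta^{-1}\phi$ I may assume $\phi(\aug{L_1})=\aug{L_2}$. Then $\phi$ respects the filtrations $\augp{L_i}{n}$ and induces a graded isomorphism of the associated graded algebras. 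Since $L\cap\augp Ln=\gamma_n(L)$ for all $n$ (a consequence of the Poincar\'e--Birkhoff--Witt theorem, valid in every characteristic for the ordinary enveloping algebra), the numbers $d_i=\dim\bigl(\gamma_i(L)/\gamma_{i+1}(L)\bigr)$ can be recovered from the dimensions $\dim\bigl(\augp Ln/\augp L{n+1}\bigr)$; hence $\dim L$, $\dim L'$, $\dim\gamma_3(L)$, the nilpotency class, and all the $d_i$ are invariants of $\uea L$. Moreover $\aug L/\augp L2\cong L/L'$, and the alternating map $\aug L/\augp L2\times\aug L/\augp L2\to\augp L2/\augp L3$, $(\bar a,\bar b)\mapsto\overline{ab-ba}$, reproduces the bracket $\Lambda^2(L/L')\to L'/\gamma_3(L)$, so the isomorphism type of $L/\gamma_3(L)$ is an invariant as well.

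Next I would bring in finer invariants, obtained from small quotients $\uea L/\augp Ln$, from the coalgebra structure of $\uea L$, and --- decisively in characteristic $p>0$ --- from the $p$-th power map $a\mapsto a^p$ on $\aug L$ read modulo higher powers of $\aug L$: typical outputs are the dimension of $Z(L)$ and of the subspaces $Z(L)\cap\gamma_i(L)$, the isomorphism type of $L/\gamma_j(L)$ for the relevant small~$j$, and the isomorphism type of the metabelian quotient $L/L''$. It is at this stage that the hypothesis $\char\F\notin\{2,3\}$ is used: for $p=2$ or $p=3$ the $p$-power operation mixes into the bottom of the $\aug L$-adic filtration and ceases to detect the features that separate certain pairs of Lie algebras.

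Finally I would invoke the classification of nilpotent Lie algebras of dimension at most~$6$: there are finitely many isomorphism types, together with finitely many one- or two-parameter families, and certain identifications happen only when $\char\F\in\{2,3\}$. Sorting this list by the invariants above, the blocks that contain a single algebra require no argument; for each block containing non-isomorphic $L_1,L_2$ with identical invariants I would show that a putative isomorphism $\uea{L_1}\to\uea{L_2}$ forces, on the low-degree graded pieces $\augp{L_i}n/\augp{L_i}{n+1}$ and on the $p$-power operation, a structure that $L_1$ and $L_2$ do not share, and I would settle the parametrised families by exhibiting an invariant that reads off the parameter. The main obstacle is precisely this last step: for the few genuinely close pairs one must locate an invariant of $\uea L$ delicate enough to tell $L_1$ from $L_2$ yet --- necessarily, since these are exactly the pairs behind the counterexamples mentioned in the abstract --- no longer available when $\char\F\in\{2,3\}$, and verifying this dichotomy pair by pair is the technical heart of the proof.
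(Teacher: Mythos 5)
Your opening reductions are sound and match the paper's preliminaries: the normalisation of an isomorphism $\uea{L_1}\to\uea{L_2}$ so that it carries $\aug{L_1}$ to $\aug{L_2}$ is the paper's Lemma~\ref{augisom}, and the recovery of the lower central series dimensions and of the bracket $\Lambda^2(L/L')\to L'/\lcs L3$ from the associated graded of the augmentation filtration is exactly Theorem~\ref{gr} (quoted from Riley--Usefi). But from that point on the proposal is a plan rather than a proof, and the gap is precisely where you locate it yourself: after sorting the classification by $\gr L$, one is left with the families listed in Sections~\ref{dim5} and~\ref{dim6} (e.g.\ $\gr{K_6}\cong\gr{K_7}\cong\gr{K_{11}}\cong\gr{K_{12}}\cong\gr{K_{13}}$ in dimension~6), and distinguishing the members of each family is the entire technical content of the theorem. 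You neither identify these families nor exhibit, for any single pair, an invariant that separates them; saying that one ``would show that a putative isomorphism forces a structure that $L_1$ and $L_2$ do not share'' restates the goal. The actual separating arguments in the paper are of two kinds: computing the dimension of the centre of the finite-dimensional quotient $\aug L/\augp Lk$ for small $k$ (e.g.\ $\dim Z=30,29,28$ for $K_3,K_5,K_{10}$), and writing the images of generators under a hypothetical isomorphism as combinations of PBW monomials and deriving a contradiction from the relations. Neither is hinted at concretely in the proposal.

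A second, more substantive objection concerns your proposed mechanism for the characteristic hypothesis. You make the $p$-th power map on $\aug L$ the ``decisive'' invariant in characteristic $p$, but the theorem covers characteristic $0$ and all $p\geq 5$, where no such operation is available or needed; the paper's separating computations are characteristic-free except that at isolated steps a coefficient $2\alpha$ or $3\beta$ must be nonzero, which is where $\char\F\notin\{2,3\}$ enters. The $p$-power map is relevant only in the opposite direction: the genuine isomorphisms in characteristic $2$ and $3$ arise from substitutions such as $x_3\mapsto x_3+x_1^2$ or $x_3\mapsto x_3+x_1^3$, which are Lie homomorphisms into $\uea L$ because $\mathrm{ad}(x_1^p)=(\mathrm{ad}\,x_1)^p$ in characteristic $p$. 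So the $p$-power structure explains why the hypothesis cannot be dropped, but it is not the tool that proves the theorem where the hypothesis holds, and a proof built around it would not get off the ground in characteristic~$0$.
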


Theorem~\ref{main} is a consequence of Theorems~\ref{5main} and~\ref{6main}.
As shown by examples in Sections~\ref{dim5} and~\ref{dim6},
the requirement about the characteristic of the underlying field is necessary.
In addition to proving Theorem~\ref{main}, we classify the possible
isomorphisms between the universal enveloping algebras of nilpotent Lie
algebras of dimension at most~5 over an arbitrary field, and those of
dimension~6 over fields of characteristic different from~2 (see Theorems~\ref{5main} and~\ref{6main}).

Little progress has ever been made on the isomorphism problem for
universal enveloping algebras. For a Lie algebra $L$, let $\uea L$ denote
its universal enveloping algebra (see Section~\ref{prelim} for the definitions).
Several invariants of $L$
are known to be determined by $\uea L$. For instance,
if $L$ is  finite-dimensional, then the (linear) dimension of $L$
coincides with the Gelfand-Kirillov dimension of $\uea L$ (see~\cite{KL}).
More recently~Riley and Usefi~\cite{ru} proved that the nilpotence of $L$ is
determined by $U(L)$ and, for a nilpotent $L$, the nilpotency
class of $L$ can be computed using $U(L)$.
Moreover the isomorphism type of $U(L)$ determines the isomorphism
type of the graded algebra $\gr L$ associated with the lower central series of
$L$ (see Section~\ref{prelim} for the definitions).
Malcolmson~\cite{mal}
showed that if $L$ is a 3-dimensional simple Lie algebra over
a field of characteristic not~2, then
$L$ is determined by $U(L)$ up to isomorphism.
Later Chun, Kajiwara, and Lee~\cite{ckl} generalized Malcolmson's result
to the class of all Lie algebras with dimension $3$
over fields of characteristic not~2.

By proving Theorem~\ref{main},
we verify  that the isomorphism
problem for universal enveloping algebras has a positive solution in the class
of nilpotent Lie algebras with dimension at most~6 over fields of
characteristic
different from 2~and~3.  The proof of this result relies on
the classification of nilpotent Lie
algebras with dimension at most 6. The classification of such Lie algebras
of dimension at most~5 has been known for a long time over
an arbitrary field. In dimension~6, several classifications have been published,
but they were often incorrect, and they usually only treated fields of
characteristic~0. Recently De Graaf~\cite{degraaf} published a
classification of 6-dimensional nilpotent Lie algebras over an arbitrary field
of characteristic not~2. As the classification by De Graaf has been obtained
making heavy use of computer calculations, and was checked
by computer for small
fields~\cite{sch}, we consider this classification as the most reliable in
the literature. The reason we do not treat 6-dimensional nilpotent Lie
algebras  over fields of characteristic~2 is that, in this case,
we do not know of a similarly reliable classification.

Our strategy in proving Theorem~\ref{main} is to determine all pairs
of nilpotent Lie algebras $L_1,\ L_2$ with dimension at most~6,
such that the graded
algebras $\gr{L_1}$ and $\gr{L_2}$ associated
with the lower central series
are isomorphic. We know from~\cite{ru}
that this is a necessary condition for the isomorphism $\uea{L_1}\cong
\uea{L_2}$. Such pairs can be read off
from the list of nilpotent Lie algebras with dimension at most~6
in~\cite{degraaf}. Next, for all such pairs,
we either argue that $U(L_1)$ cannot be isomorphic to $U(L_2)$, or
we exhibit an explicit isomorphism between $U(L_1)$ and $U(L_2)$.
Initially, computer experiments played a role in determining the isomorphisms
between universal enveloping algebras of nilpotent Lie algebras. Recent work
by Eick~\cite{eick} describes a practical algorithm to decide isomorphism between
finite dimensional nilpotent associative algebras.
Her algorithm was implemented in the {\sf ModIsom} package~\cite{modisom} of
the {\sf GAP} computational algebra system~\cite{gap} and the implementation
works for algebras with  dimensions more than 100 over small finite fields.
We used this implementation to decide isomorphisms between the
finite-dimensional, nilpotent quotients $\aug{L}/\augp Lk$ of the
augmentation ideals $\aug L$ of $U(L)$ (see Section~\ref{prelim} for notation).
We remark here an interesting observation. If $L$ is nilpotent of class $c$ then based on our calculations the isomorphism type of the quotient $\aug{L}/\augp L{c+1}$ determines the isomorphism type of $L$. So, the question remains whether $\aug{L}/\augp L{c+1}$
determines the isomorphism type of $L$ in all dimensions.

\section{Preliminaries}\label{prelim}

In this section we summarize some important facts about universal enveloping
algebras of Lie algebras; see~\cite{dix} for a more detailed background.
We assume from now on that Lie algebras are finite-dimensional, even though
most of the results referred to in this section hold for a larger class of
Lie algebras.

Let $L$ be a Lie algebra over a field $\F$.
The universal enveloping algebra $\uea L$ of $L$ is
defined as follows. For $i\geq 0$, let $L^{\otimes i}$ denote the $i$-fold tensor
power of $L$ and set
$$
\mathcal T=\bigoplus_{i=0}^\infty L^{\otimes i}.
$$
The space $L^{\otimes 0}$ is one-dimensional, generated by the unit of $\mathcal T$,
and is usually identified with $\F$.
The sum $\mathcal T$ can be considered as an algebra over $\F$ with respect to
the obvious multiplication defined on the generators  of $\mathcal T$ as
$$
(v_1\otimes \cdots\otimes v_k)\cdot (u_1\otimes \cdots\otimes u_l)=
v_1\otimes \cdots\otimes v_k\otimes u_1\otimes \cdots\otimes u_l,
$$
for all $v_1,\ldots,v_k,u_1,\ldots,u_l\in L$. The algebra
$\mathcal T$ is usually referred to as the {\em tensor algebra} of $L$.
Let $\mathcal I$ denote the
two-sided ideal of $\mathcal T$ generated by elements of the form
$[u,v]-u\otimes v+v\otimes u$.  Then the {\em universal enveloping algebra}
$\uea L$ of $L$ is defined as the quotient $\mathcal T/\mathcal I$.
We view $L=L^{\otimes 1}$ as a  Lie subalgebra of $\uea L$.
Universal enveloping algebras have the following universal property.

\begin{lemma}\label{Uni}
Suppose that $L$ is a Lie algebra, $A$ is an associative algebra, and let
$\varphi:L\rightarrow A$ be a Lie algebra homomorphism. Then there is a unique
associative algebra homomorphism $\overline \varphi:\uea L\rightarrow A$ such
that $\overline\varphi|_L=\varphi$.
\end{lemma}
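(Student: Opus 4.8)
The plan is to construct $\overline\varphi$ in two stages: first extend $\varphi$ to the tensor algebra $\mathcal T$, then pass to the quotient $\uea L=\mathcal T/\mathcal I$. For the first stage, observe that for each $i\geq 1$ the map $L^{i}\to A$, $(v_1,\dots,v_i)\mapsto\varphi(v_1)\cdots\varphi(v_i)$, is $i$-linear and hence factors through an $\F$-linear map $L^{\otimes i}\to A$; together with the map $L^{\otimes 0}=\F\to A$, $\lambda\mapsto\lambda 1_A$, these assemble into a single linear map $\tilde\varphi\colon\mathcal T\to A$ determined by $\tilde\varphi(v_1\otimes\cdots\otimes v_k)=\varphi(v_1)\cdots\varphi(v_k)$. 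It is immediate from the definition of the product on $\mathcal T$ that $\tilde\varphi$ is multiplicative, and $\tilde\varphi(1)=1_A$, so $\tilde\varphi$ is a homomorphism of associative algebras with $\tilde\varphi|_{L}=\varphi$.

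For the second stage I would show $\mathcal I\subseteq\ker\tilde\varphi$. As $\ker\tilde\varphi$ is a two-sided ideal of $\mathcal T$, it suffices to check that each generator $[u,v]-u\otimes v+v\otimes u$ of $\mathcal I$ lies in $\ker\tilde\varphi$; and indeed
\[
\tilde\varphi\bigl([u,v]-u\otimes v+v\otimes u\bigr)=\varphi([u,v])-\varphi(u)\varphi(v)+\varphi(v)\varphi(u)=0,
\]
the last equality holding because $\varphi$ is a Lie homomorphism into $A$ equipped with the commutator bracket, so $\varphi([u,v])=\varphi(u)\varphi(v)-\varphi(v)\varphi(u)$. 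Consequently $\tilde\varphi$ factors through the canonical projection $\pi\colon\mathcal T\to\mathcal T/\mathcal I=\uea L$, yielding an algebra homomorphism $\overline\varphi\colon\uea L\to A$ with $\overline\varphi\circ\pi=\tilde\varphi$. Restricting to $L$ (that is, to $\pi(L^{\otimes 1})$, which is how $L$ sits inside $\uea L$) gives $\overline\varphi|_{L}=\tilde\varphi|_{L}=\varphi$, as required.

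Uniqueness is then immediate: $L^{\otimes 1}$ generates $\mathcal T$ as an associative algebra and $\pi$ is surjective, so $L$ generates $\uea L$; two algebra homomorphisms out of $\uea L$ that agree on a generating set coincide, hence any homomorphism extending $\varphi$ equals $\overline\varphi$. I do not expect any real obstacle here---the argument is the standard one. The only points requiring a little care are the well-definedness of the stage-one extension $\tilde\varphi$ (that the multilinear maps genuinely descend to the tensor powers) and, if one wishes to be fully scrupulous about the phrase ``$\overline\varphi|_L$'', the fact that $L$ embeds into $\uea L$, which is the content of the Poincar\'e--Birkhoff--Witt theorem; but that embedding is not actually needed for the existence or uniqueness of $\overline\varphi$.
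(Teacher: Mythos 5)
Your proof is correct and is the standard argument (extend to the tensor algebra by multilinearity, check the generators of $\mathcal I$ are killed because $\varphi$ is a Lie homomorphism into $A$ with the commutator bracket, pass to the quotient, and get uniqueness since $L$ generates $\uea L$). The paper itself states this lemma without proof as standard background material from Dixmier's book, and your argument is precisely the one implicitly relied upon, so there is nothing to compare beyond noting the agreement.
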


The linear subspace spanned by a subset $X$ of a vector space
is denoted by $\la X\ra_{\F}$.
Recall that the {\em center} $Z(L)$ of a Lie algebra $L$ is the ideal
$$
\la x\in L\ |\ [x,a]=0, \mbox{ for all }a\in L\ra_{\F}.
$$
The center of the universal enveloping algebra plays an important role in
our arguments. The {\em center} $Z(A)$ of an associative
algebra $A$ is defined as
$$
Z(A)=\la x\in A\ |\ ax=xa, \mbox{ for all } a\in A \ra_{\F}.
$$
It is clear that the subalgebra of $U(L)$ generated by the center
$Z(L)$ of $L$ lies in $Z(U(L))$.

Note that $\mathcal T_+=L^{\otimes 1}\oplus L^{\otimes 2}\oplus\cdots$ is a two-sided ideal in
$\mathcal T$, and the image of $\mathcal T_+$ in $\uea L$ is referred to as
the {\em augmentation ideal} of $\uea L$ and is denoted by $\aug L$.
The following is proved in~\cite[Lemma~2.1]{ru}.

\begin{lemma}\label{augisom}
For Lie algebras $L$ and $K$, $\uea L\cong \uea K$ if and only if
$\aug L\cong\aug K$.
\end{lemma}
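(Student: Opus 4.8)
The plan is to prove the two implications separately. The organizing observation is that $\aug L$ is not an arbitrary ideal of $\uea L$: applying Lemma~\ref{Uni} to the zero map $L\to\F$ produces an algebra homomorphism $\varepsilon_L\colon\uea L\to\F$ with $\varepsilon_L|_L=0$, and by construction $\aug L=\ker\varepsilon_L$. Hence $\aug L$ is a two-sided ideal of codimension~$1$ and $\uea L=\F\cdot 1\oplus\aug L$ as a vector space, so $\uea L$ is, up to canonical isomorphism, the unitalization of the (possibly non-unital) algebra $\aug L$, with product $(\lambda 1+a)(\mu 1+b)=\lambda\mu 1+(\lambda b+\mu a+ab)$, the second summand lying in $\aug L$.

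For the direction ``$\aug L\cong\aug K\Rightarrow\uea L\cong\uea K$'' I would simply invoke functoriality of unitalization: given an isomorphism $\psi\colon\aug L\to\aug K$ of non-unital $\F$-algebras, the map $\lambda 1+a\mapsto\lambda 1+\psi(a)$ is readily checked to be an isomorphism $\uea L\to\uea K$ using the product formula above. This direction is routine.

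For the converse, let $\Phi\colon\uea L\to\uea K$ be an algebra isomorphism. Since $\aug L$ has codimension~$1$, so does $\Phi(\aug L)$, which is again a two-sided ideal; it is therefore the kernel of an algebra homomorphism $g\colon\uea K\to\uea K/\Phi(\aug L)\cong\F$. The step I expect to be the main obstacle is that there is no reason for $\Phi$ to carry $\aug L$ onto $\aug K$ — equivalently, for $g$ to equal $\varepsilon_K$ — since $\uea K$ typically has many ideals of codimension~$1$, so $\aug K$ is not intrinsically singled out among them. To repair this I would twist $\Phi$ by an automorphism of $\uea K$. Because $\F$ is commutative, $f:=g|_K\colon K\to\F$ annihilates $[K,K]$, so $x\mapsto x-f(x)\cdot 1$ is a Lie homomorphism $K\to\uea K$ and extends, via Lemma~\ref{Uni}, to an algebra endomorphism $\Theta$ of $\uea K$; the endomorphism extending $x\mapsto x+f(x)\cdot 1$ is a two-sided inverse of $\Theta$ (each composite fixes $1$ and every element of $K$, and $K$ generates $\uea K$), so $\Theta$ is an automorphism. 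Now $g\circ\Theta$ is an algebra homomorphism $\uea K\to\F$ that vanishes on $K$, hence $g\circ\Theta=\varepsilon_K$, so $\Theta^{-1}(\ker g)=\ker(g\circ\Theta)=\aug K$. Therefore $\Theta^{-1}\circ\Phi\colon\uea L\to\uea K$ is an algebra isomorphism carrying $\aug L$ onto $\aug K$, and restricting it to $\aug L$ yields the desired isomorphism $\aug L\cong\aug K$ of non-unital algebras, completing the proof.
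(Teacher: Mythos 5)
The paper does not prove this lemma itself; it quotes it from \cite[Lemma~2.1]{ru}. Your argument is correct and complete, and it is essentially the standard proof of that result: the unitalization $\uea L=\F\cdot 1\oplus\aug L$ handles one direction, and for the converse you correctly identify the real issue (an isomorphism $\uea L\to\uea K$ need not respect augmentation ideals) and resolve it by twisting with the automorphism of $\uea K$ induced by $x\mapsto x-f(x)\cdot 1$, which is exactly the device used in the cited reference.
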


Investigating the  isomorphism
between $\uea L$ and $\uea K$ will often be carried out, using
Lemma~\ref{augisom}, through studying
the isomorphism between $\aug K$ and $\aug L$.
For $i\geq 1$, let $\augp Li$ denote the ideal of $\aug L$ generated by the
products
of elements of $\aug L$ with length at least $i$. This way we obtain
a descending series in $\aug L$:
$$
\aug L\geq \augp L2\geq\cdots.
$$
The sequence $\augp Li$ is a filtration on $\aug L$: for $x\in \W^i(L)$
and $y\in\W^j(L)$ we have that $xy\in\augp L{i+j}$.

A basis for $\augp Li$
can usually be constructed as follows. Let $\lcs Li$ denote the $i$-th term of
the {\em lower central series} of $L$; that is $L^1=L$, and, for $i\geq 1$,
$L^{i+1}=[L^i,L]$. For an element $v\in L$, we define
the {\em weight} $w(v)$ of $v$ as the largest integer $i$
such that $v\in\lcs Li$. A basis $\mathcal B=\{v_1,\ldots,v_d\}$ of a Lie algebra
$L$ is said to be {\em homogeneous} if the basis elements
with weight at least $i$ form a basis for $\lcs Li$.
An element of $\uea L$ of the form $m=v_{i_1}v_{i_2}\ldots v_{i_k}$
with $i_1\leq i_2\leq\cdots\leq i_k$ is said
to be a {\em Poincar\'e-Birkhoff-Witt monomial},
or more briefly, a
{\em PBW monomial}, in $\mathcal B$.
 The {\em weight} $w(m)$ of
such a monomial is defined as $w(v_{i_1})+w(v_{i_2})+\cdots+w(v_{i_k})$.

\begin{theorem}[Proposition~3.1(1) in \cite{R}]\label{basis-w}
Let $L$ be a  Lie algebra with a homogeneous
basis $\mathcal B$ and let $t\geq 1$.
Then  the set of all PBW monomials in $\mathcal B$
with weight at least $t$
forms an $\F$-basis for $\augp Lt$, for every $t\geq 1$.
\end{theorem}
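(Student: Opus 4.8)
The plan is to prove the statement in two stages, following the standard PBW machinery. First I would establish that the set $\mathcal{P}$ of \emph{all} PBW monomials in $\mathcal B$ (of any weight, including the empty monomial~$1$) is an $\F$-basis of $\uea L$; this is essentially the Poincar\'e--Birkhoff--Witt theorem, which is available in any reference such as~\cite{dix}. Consequently, the PBW monomials of weight at least~$1$ form an $\F$-basis of $\aug L$. Then the whole problem reduces to showing, for each fixed $t\geq 1$, that the $\F$-span of the PBW monomials of weight at least~$t$ coincides with $\augp Lt$.

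The inclusion $\la m \in \mathcal P : w(m)\geq t\ra_\F \subseteq \augp Lt$ is the easy direction. If $\mathcal B=\{v_1,\dots,v_d\}$ is homogeneous and $v$ has weight~$j$, then $v\in L^j=[L^{j-1},L]\subseteq \W^{j}(L)$ after passing to the augmentation ideal; more precisely, since $\W(L)$ is generated as an ideal by~$L$, one checks by induction on~$j$ using the filtration property $\W^i(L)\W^k(L)\subseteq\W^{i+k}(L)$ and the identity $[x,y]=xy-yx$ that $L^j\subseteq\W^{j}(L)$. Hence a PBW monomial $m=v_{i_1}\cdots v_{i_k}$ of weight $w(v_{i_1})+\cdots+w(v_{i_k})\geq t$ lies in $\W^{w(v_{i_1})}(L)\cdots\W^{w(v_{i_k})}(L)\subseteq\augp Lt$, again by the filtration property.

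For the reverse inclusion I would argue that $\augp Lt$ is spanned by products $x_1\cdots x_r$ with $x_i\in L$ and $r\geq t$ (by definition $\augp Lt$ is the ideal generated by length-$\geq t$ products of elements of $\W(L)$, and $\W(L)=L+\W^2(L)$, so one may assume each factor lies in~$L$; the ideal generated by such products inside $\W(L)$ is spanned by longer such products since $L$ generates $\W(L)$). Given such a product of $r\geq t$ elements of~$L$, rewrite it as an $\F$-combination of PBW monomials using the PBW straightening process: each commutation $v_bv_a=v_av_b+[v_b,v_a]$ ($b>a$) replaces a degree-$r$ term by a degree-$r$ term plus a term $[v_b,v_a]\cdot(\text{rest})$, and since $[v_b,v_a]\in L^2\subseteq\W^2(L)$ the new term again lies in $\augp Lt$ and, crucially, is a shorter product of elements lying in the terms of the lower central series with total weight still $\geq t$. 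The bookkeeping is cleanest if one does induction on the word length $r$ with, as secondary induction, the number of inversions: every PBW monomial produced along the way has weight~$\geq$ the total weight of the word it came from, which is~$\geq r\geq t$ for the original word (weight of an element of $L$ is~$\geq1$), and the bracket terms, though shorter, have a factor of strictly larger weight, so total weight is preserved. Thus every spanning product of $\augp Lt$ is an $\F$-combination of PBW monomials of weight~$\geq t$, completing the proof.

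The main obstacle is making the straightening induction in the last paragraph genuinely rigorous: one must choose the induction parameter (word length, or a lexicographic pair of length and inversion count) so that it strictly decreases under the rewriting step while the invariant ``weight~$\geq t$'' is maintained, and one must verify that an arbitrary element of $\augp Lt$ really is an $\F$-span of length-$\geq t$ monomials in~$L$ (not merely in $\W(L)$) — this uses that $L$ generates $\W(L)$ as an algebra together with $L\cdot\W^{i}(L),\ \W^i(L)\cdot L\subseteq\W^{i+1}(L)$. Once the correct induction is set up, the computation itself is routine.
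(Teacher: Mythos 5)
Your proof is correct. Note that the paper does not actually prove this statement --- it is quoted verbatim as Proposition~3.1(1) of Riley's paper \cite{R} --- so there is no internal proof to compare against; your argument (PBW theorem for linear independence, the filtration property plus $L^j\subseteq\augp Lj$ for the easy inclusion, and a straightening induction on word length and inversion count for the spanning claim) is the standard one and is essentially how the result is established in \cite{R}. The one point worth making explicit in your write-up is where homogeneity of $\mathcal B$ enters: after a commutation step the bracket $[v_b,v_a]$ lies in $L^{w(v_a)+w(v_b)}$, and you need that it is therefore a linear combination of \emph{basis} elements of weight at least $w(v_a)+w(v_b)$, which is exactly what the homogeneity hypothesis guarantees.
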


In this paper we are interested to discover,
for a pair of nilpotent non-isomorphic
Lie algebras  $L_1$ and $L_2$, if $U(L_1)$ can be isomorphic to $U(L_2)$.
Often we use the graded algebras $\gr{L_i}$ associated with the lower
central series of the $L_i$ to rule out the isomorphism between
$\uea{L_1}$ and $\uea{L_2}$. For a Lie algebra $L$, $\gr L$ is defined
as the algebra on the linear space
$$
\gr L=L/\lcs L2\oplus \lcs L2/\lcs L3\oplus\cdots
$$
with respect to the multiplication given by the rule
$$
[x+\lcs L{i+1},y+\lcs L{j+1}]=[x,y]+\lcs L{i+j+1} \quad
\mbox{for all $x\in \lcs Li$ and $y\in \lcs Lj$}.
$$
The following is proved in~\cite[Proposition~4.1]{ru}.

\begin{theorem}\label{gr}
For any Lie algebra $L$, the isomorphism type of $\uea L$  determines the isomorphism type of $\gr L$. Consequently,
if $L$ is nilpotent of class $2$, then the isomorphism type of $\uea L$ determines
the isomorphism type of  $L$.
\end{theorem}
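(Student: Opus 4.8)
The plan is to exploit the filtration $\aug L\geq\augp L2\geq\cdots$ on the augmentation ideal together with Theorem~\ref{basis-w}. The key observation is that, since $\uea L\cong\uea K$ is equivalent to $\aug L\cong\aug K$ by Lemma~\ref{augisom}, and since any isomorphism $\aug L\to\aug K$ of associative algebras automatically preserves the powers $\augp Li$ (these are defined purely ring-theoretically, as the ideal generated by $i$-fold products), such an isomorphism induces isomorphisms of the successive quotients
$$
\augp Li/\augp L{i+1}\;\longrightarrow\;\augp Ki/\augp K{i+1}
$$
for every $i\geq 1$, and in fact an isomorphism of the associated graded algebra $\bigoplus_{i\geq 0}\augp Li/\augp L{i+1}$ onto its counterpart for $K$. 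So it suffices to show that this associated graded algebra of the filtered algebra $\uea L$ recovers $\gr L$, or at least recovers enough of it.

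The main step is to identify $\bigoplus_{i\geq 0}\augp Li/\augp L{i+1}$ explicitly. Fix a homogeneous basis $\mathcal B=\{v_1,\ldots,v_d\}$ of $L$. By Theorem~\ref{basis-w}, the PBW monomials in $\mathcal B$ of weight at least $t$ form a basis of $\augp Lt$; consequently the images of the PBW monomials of weight exactly $t$ form a basis of $\augp Lt/\augp L{t+1}$. I would check that the induced multiplication on this associated graded algebra is the ``commutative'' one: for basis elements $v_i,v_j$ of weights $w_i,w_j$, the product $v_iv_j$ in $\uea L$ differs from $v_jv_i$ by $[v_i,v_j]\in\lcs L{w_i+w_j}$, whose image in $\uea L$ lies in $\augp L{w_i+w_j}$; hence $v_iv_j$ and $v_jv_i$ are congruent modulo $\augp L{w_i+w_j+1}$ in the $(w_i+w_j)$-graded piece. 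This shows the associated graded algebra of $\uea L$ is commutative, and a comparison of bases (using that a homogeneous basis of $L$ induces a basis of $\gr L$) identifies it with the symmetric algebra $S(\gr L)$ on the graded vector space $\gr L$ — more precisely, with $U(\gr L)$ where $\gr L$ is viewed as a graded Lie algebra, but since the relevant degree-one piece already detects the Lie bracket one can work with either description. The point is that the degree-one component of this graded algebra is $\aug L/\augp L2\cong L/\lcs L2$ together with its grading, and the Lie bracket $L/\lcs L2\times\lcs L2/\lcs L3\to\lcs L3/\lcs L4$, and more generally all of the structure constants of $\gr L$, can be read off from the multiplication in $\bigoplus_i\augp Li/\augp L{i+1}$. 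Since this whole graded algebra is an invariant of $\uea L$, so is $\gr L$.

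The hard part is making precise the claim that $\gr L$ is recovered, and not merely the commutative algebra $S(\gr L)$: one must argue that the Lie bracket of $\gr L$ is encoded in the associative multiplication of the associated graded algebra of $\uea L$. This is exactly the content of~\cite[Proposition~4.1]{ru}, and the mechanism is that the commutator in $\uea L$ of an element of $\lcs Li$ with an element of $\lcs Lj$, computed modulo $\augp L{i+j+1}$, reproduces the bracket $\lcs Li/\lcs L{i+1}\times\lcs Lj/\lcs L{j+1}\to\lcs L{i+j}/\lcs L{i+j+1}$; so although the associated graded algebra is commutative as an algebra, the first-order correction term (the commutator of lifts, which lands one filtration degree higher than the product) is canonically defined and equals the bracket of $\gr L$. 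For the final sentence: if $L$ is nilpotent of class~$2$, then $\lcs L3=0$, so the grading is trivial beyond degree~$2$ and $\gr L=L/\lcs L2\oplus\lcs L2$ carries exactly the bracket of $L$; thus $\gr L\cong L$ as Lie algebras, and therefore $\uea L$ determines $L$ up to isomorphism.
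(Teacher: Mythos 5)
There is a genuine error in the central mechanism of your sketch. You claim that the associated graded algebra $\bigoplus_i \augp Li/\augp L{i+1}$ is commutative, arguing that for homogeneous basis elements $v_i,v_j$ of weights $w_i,w_j$ the commutator $[v_i,v_j]$ lies in $\lcs L{w_i+w_j}$ and ``hence'' $v_iv_j\equiv v_jv_i$ modulo $\augp L{w_i+w_j+1}$. This inference is a non sequitur: by Theorem~\ref{basis-w}, membership in $\lcs L{w_i+w_j}$ places $[v_i,v_j]$ in $\augp L{w_i+w_j}$, which is exactly the degree in which the product $v_iv_j$ lives, not one degree higher; to get commutativity you would need $[v_i,v_j]\in\lcs L{w_i+w_j+1}$, which fails in general. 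Concretely, for the Heisenberg algebra $L=\gen{x,y,z\mid [x,y]=z}$ one has $w(z)=2$, so $z=xy-yx$ is a PBW monomial of weight exactly $2$ and hence lies in $\augp L2\setminus\augp L3$; the images of $x$ and $y$ in degree one therefore do not commute in degree two. The correct statement --- and the actual content of \cite[Proposition~4.1]{ru}, which is all the paper itself invokes --- is that the associated graded algebra of the augmentation filtration is isomorphic, as a graded algebra, to $\uea{\gr L}$ and not to $S(\gr L)$; the bracket of $\gr L$ is then recovered as the honest commutator of this (genuinely noncommutative) graded algebra, restricted to the graded Lie subalgebra generated by the degree-one component. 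The ``commutative associated graded plus first-order Poisson bracket'' picture in your final paragraph belongs to the PBW filtration of $\uea L$ by monomial degree, whose associated graded really is $S(L)$; transplanting it to the augmentation filtration is what produces the inconsistency between your two descriptions.

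The rest of your argument is sound and matches what the paper does: an algebra isomorphism $\aug L\to\aug K$ preserves the ring-theoretically defined powers $\augp Li$ and so induces a graded isomorphism of associated graded algebras, and your deduction of the second statement (for $L$ finite-dimensional of class~$2$, $\lcs L2$ is central and the bracket depends only on cosets modulo $\lcs L2$, so $L\cong\gr L$) is exactly the remark the paper makes after the theorem. But note that even after correcting the identification to $\uea{\gr L}$, one must still show that the graded algebra $\uea{\gr L}$ determines the graded Lie algebra $\gr L$ --- that is, one must locate $\lcs Li/\lcs L{i+1}$ inside the full degree-$i$ component $\augp Li/\augp L{i+1}$, for instance as the span of iterated commutators of degree-one elements. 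Your proposal gestures at this but defers it to \cite[Proposition~4.1]{ru}, which is where the real work lies; since the paper also simply cites that result, the deferral is acceptable, but the mechanism you offer in its place would not survive being made precise.
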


Suppose that $L_1$ and $L_2$ are nilpotent Lie algebras such that
$U(L_1)\cong U(L_2)$. Then Theorem~\ref{gr} implies, for all $i$, that
$\dim \lcs {L_1}i=\dim \lcs {L_2}i$, and that the nilpotency classes of $L_1$ and $L_2$
coincide. The second statement of Theorem~\ref{gr}
is an easy consequence of
the first statement if $L$ is finite-dimensional.
Indeed, if $L$ is a finite-dimensional Lie algebra
of nilpotency class $2$, then it is always isomorphic to $\gr L$.
The same assertion without a restriction on the dimension is proved
in~\cite[Section~5]{ru}.

The following lemma can be found in~\cite[Lemma~5.1]{ru}.

\begin{lemma}\label{L^i}
Let $L$ and $K$ be Lie algebras and let $\varphi: \aug L\to \aug K$ be
an isomorphism.
Then $\varphi(L^i+\augp L{i+1})=K^i+\augp K{i+1}$, for every positive integer $i$.
\end{lemma}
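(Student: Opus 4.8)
The plan is to proceed by induction on $i$, using the filtration $\augp L1=\aug L\geq\augp L2\geq\cdots$ and the description of its quotients in terms of the associated graded algebra $\gr L$. First I would record the base case $i=1$: here the claim is simply $\varphi(L+\augp L2)=K+\augp L2$... wait, rather $\varphi(\aug L)=\aug K$, which is the hypothesis that $\varphi$ is an isomorphism of augmentation ideals, together with the observation that an isomorphism necessarily maps $\augp Li$ onto $\augp Ki$ for every $i$ (the ideal $\augp Li$ is generated by all products of at least $i$ elements of $\aug L$, hence is preserved by any algebra isomorphism $\aug L\to\aug K$). So the first step is to establish $\varphi(\augp Li)=\augp Ki$ for all $i\geq1$; this is purely formal from the definition of $\augp Li$.

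Next I would set up the inductive step. Suppose $\varphi(L^j+\augp L{j+1})=K^j+\augp K{j+1}$ for all $j<i$; I want the same for $i$. The key point is to identify $L^i+\augp L{i+1}$ inside $\aug L$ intrinsically, i.e.\ in a way that only refers to the associative algebra structure of $\aug L$ and the subspaces $\augp L{j}$. Using a homogeneous basis $\mathcal B$ of $L$ and Theorem~\ref{basis-w}, one sees that $L^i+\augp L{i+1}$ is spanned, modulo $\augp L{i+1}$, by the weight-$i$ basis vectors, i.e.\ by the degree-one part of the graded algebra $\gr L$ in degree $i$. More usefully, I claim that
$$
L^i+\augp L{i+1}=\bigl(L+\augp L2\bigr)^{[i]}+\augp L{i+1},
$$
where the bracket-power on the right means the span of all $i$-fold Lie commutators $[x_1,[x_2,[\ldots,x_i]\ldots]]$ with each $x_k\in L$ (equivalently, iterated associative commutators), taken inside $\aug L$. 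Indeed $L^i$ is by definition spanned by such commutators, and any commutator of elements of $L+\augp L2$ of length $i$ lies in $L^i$ modulo $\augp L{i+1}$ because a factor from $\augp L2$ pushes the commutator's weight up by at least one. Since $\varphi$ is an associative algebra isomorphism it commutes with forming commutators, and by the induction hypothesis $\varphi(L+\augp L2)=K+\augp K2$ (the $j=1$ case); combined with $\varphi(\augp L{i+1})=\augp K{i+1}$ from the first step, this yields
$$
\varphi\bigl(L^i+\augp L{i+1}\bigr)=\varphi\bigl((L+\augp L2)^{[i]}\bigr)+\augp K{i+1}
=(K+\augp K2)^{[i]}+\augp K{i+1}=K^i+\augp K{i+1},
$$
as required.

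The step I expect to be the main obstacle is justifying the identity $L^i+\augp L{i+1}=(L+\augp L2)^{[i]}+\augp L{i+1}$ cleanly, and in particular the inclusion showing that commutators of length $i$ of elements of $\aug L$ that are \emph{not} in $L$ still land in $L^i+\augp L{i+1}$. This requires the filtration estimate that a commutator of a weight-$a$ and a weight-$b$ homogeneous element lies in $\augp L{a+b}$, which follows from Theorem~\ref{basis-w} together with the fact that in $\uea L$ the commutator $[\uea L$-element of weight $a$, $\uea L$-element of weight $b]$ has weight at least $a+b$ (a straightforward PBW-monomial bookkeeping argument, reducing products to PBW form and checking that the non-leading terms only increase in weight). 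Once this filtration behaviour is in hand, the induction runs without further difficulty; alternatively one can bypass the explicit bracket-power identity and argue directly that $\varphi$ induces an isomorphism of the graded algebras $\gr{\aug L}\cong\gr{\aug K}$ respecting the embedding of $\gr L$, which by Theorem~\ref{gr} carries $L^i/L^{i+1}$ to $K^i/K^{i+1}$; translating this back through the filtration gives exactly $\varphi(L^i+\augp L{i+1})=K^i+\augp K{i+1}$.
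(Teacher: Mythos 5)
Your argument is correct, but note that the paper itself offers no proof of this lemma: it is quoted verbatim from \cite[Lemma~5.1]{ru}, so there is no internal argument to compare against, and what you have written is a legitimate self-contained substitute. The heart of your proof is the intrinsic characterization $L^i+\augp L{i+1}=V_i+\augp L{i+1}$, where $V_i$ denotes the span of all $i$-fold iterated commutators of elements of $\aug L$. The inclusion $\supseteq$ follows, as you say, by writing each entry as $\ell+y$ with $\ell\in L$ and $y\in\augp L2$, expanding multilinearly, and observing that every term containing a factor from $\augp L2$ lies in $\augp L{i+1}$; the ``filtration estimate'' you flag as the main obstacle is actually immediate, since $[u,v]=uv-vu$ and the paper already records $\augp La\cdot\augp Lb\su\augp L{a+b}$, so no PBW bookkeeping is required. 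The inclusion $\subseteq$ holds because $L^i$ is spanned by iterated Lie brackets of elements of $L$ and the bracket of $L$ is the restriction of the commutator of $\uea L$. Since $V_i$ is defined purely in terms of the associative structure of $\aug L$, any isomorphism $\varphi$ carries it onto the corresponding subspace of $\aug K$, and the lemma follows. Three small points: your induction is vacuous, since the only ``inductive hypothesis'' ever invoked is $\varphi(L+\augp L2)=\varphi(\aug L)=\aug K$ (indeed $L+\augp L2=\aug L$ by Theorem~\ref{basis-w}), so the proof is really a direct one; in your definition of the bracket power you wrote ``each $x_k\in L$'' where you mean $x_k\in L+\augp L2$; and the alternative route you sketch at the end, via the graded algebra of $\aug L$ with respect to the filtration $\{\augp Li\}$, is essentially the argument given in \cite{ru}, so both approaches are available and yours is the more elementary of the two.
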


\section{Nilpotent Lie algebras with dimension at most 5}\label{dim5}

In this section we determine all isomorphisms between the universal
enveloping algebras of nilpotent Lie algebras
with dimension at most 5.
A classification of such Lie algebras is well known and
can be found, for instance, in~\cite{degraaf}. The main result of this section
is the following.

\begin{theorem}\label{5main}
Let $L$ and $K$ be nilpotent Lie algebras of dimension at most~$5$ over a field $\F$ such that $\uea{L}\cong\uea{K}$.
Then one of the following must hold:
\begin{itemize}
\item[(i)] $L\cong K$;
\item[(ii)] $\char\,\F=2$; further $L$ and $K$ are isomorphic to the
Lie algebras $L_{5,3}$ and $L_{5,5}$ or they are isomorphic to the Lie algebras
$L_{5,6}$ and $L_{5,7}$ in Section~$5$ of~\cite{degraaf}.
\end{itemize}
\end{theorem}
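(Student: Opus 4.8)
The plan is to reduce Theorem~\ref{5main} to a finite case-check driven by the known classification of nilpotent Lie algebras of dimension at most~5. First I would record that, by Lemma~\ref{augisom}, the hypothesis $\uea L\cong\uea K$ is equivalent to $\aug L\cong\aug K$, and by Theorem~\ref{gr} it forces $\gr L\cong\gr K$; in particular $\dim L=\dim K$ and the nilpotency classes agree, and moreover $\dim L^i=\dim K^i$ for all~$i$ by Lemma~\ref{L^i}. Dimensions~$1$ and~$2$ are trivial (abelian), and class-$2$ algebras are handled outright by Theorem~\ref{gr}, so only the nonabelian algebras of dimension~$3$, $4$ and~$5$ of class~$\geq 3$ need attention. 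Running through De Graaf's list, I would isolate exactly those pairs $L\not\cong K$ with $\gr L\cong\gr K$ — these are the only candidates for a nontrivial isomorphism $\uea L\cong\uea K$, since all other pairs are already separated by their associated graded algebras.

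For each surviving pair I then argue in one of two directions. To \emph{rule out} an isomorphism I would exhibit an invariant of $\aug L$ that distinguishes $\aug L$ from $\aug K$: the natural tools are the dimensions of the quotients $\augp Li/\augp L{i+1}$ (computable from a homogeneous basis via Theorem~\ref{basis-w}), the structure of the center $Z(\uea L)$ together with how $Z(L)$ sits inside it, and — crucially — the subspaces $L^i+\augp L{i+1}$, which by Lemma~\ref{L^i} are preserved by any isomorphism $\aug L\to\aug K$. In low dimension one can pin down $L^2+\augp L3$, $L^3+\augp L4$, etc., compute commutators of preimages, and read off enough of the Lie bracket to recover $L$ up to isomorphism; when the bracket data forced by these constraints is incompatible with $K$, no isomorphism exists. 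To \emph{confirm} an isomorphism in the exceptional characteristic-$2$ cases, I would instead write down an explicit $\F$-linear map on generators, check it is a Lie homomorphism into $\uea K$ (or $\aug K$), invoke the universal property (Lemma~\ref{Uni}) to extend it to $\uea L\to\uea K$, and verify it is bijective by checking it carries a PBW basis to a spanning set; the role of $\char\F=2$ is that the ``correction terms'' needed to make the map a homomorphism involve coefficients like~$1/2$ that only make sense, or only collapse appropriately, in characteristic~$2$.

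The main obstacle I anticipate is the characteristic-$2$ pairs $\{L_{5,3},L_{5,5}\}$ and $\{L_{5,6},L_{5,7}\}$: here $\gr L\cong\gr K$ and the coarse numerical invariants above all coincide, so ruling out an isomorphism is impossible (indeed false), and one must actually produce the isomorphism. Finding the correct generator-level formula — including the precise nonlinear correction terms in $\aug K$ that compensate for the difference in structure constants — is the delicate part; a wrong ansatz fails to be a Lie homomorphism, and verifying bijectivity requires care with the PBW filtration. Conversely, for these same two pairs over fields with $\char\F\neq 2$ I must show \emph{no} isomorphism exists, which means finding an invariant subtle enough to separate algebras that agree on $\gr L$ and on all $\augp Li/\augp L{i+1}$; the argument here will go through the fine structure of $L^i+\augp L{i+1}$ and bracket relations modulo higher terms, exploiting that the characteristic-$2$ correction terms are simply unavailable. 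All remaining pairs from the classification should succumb to the routine invariant computations sketched above.
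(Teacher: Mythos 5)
Your proposal follows essentially the same route as the paper: reduce via Lemma~\ref{augisom} and Theorem~\ref{gr} to the two candidate pairs $\{L_{5,3},L_{5,5}\}$ and $\{L_{5,6},L_{5,7}\}$, distinguish them in characteristic $\neq 2$ by analysing the centers of (nilpotent quotients of) the augmentation ideals and by bracket computations with a putative isomorphism, and in characteristic $2$ build explicit isomorphisms from a generator-level map (the correction term is $x_1^2$, which works because the coefficient $2$ in $[x_2,x_1^2]=-2x_1x_4+x_5$ vanishes) extended by the universal property and checked bijective via the filtration. This matches the paper's proof, which is carried out in Lemma~\ref{5-dim} together with the preceding inspection of De~Graaf's list.
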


Since there is a unique isomorphism class of nilpotent Lie  algebras
with dimension 1, and there is a unique such class with dimension 2,
the isomorphism problem of universal enveloping algebras is trivial in these
cases. Up to isomorphism,
there are two nilpotent Lie algebras with dimension $3$, an abelian,
and a non-abelian. By Theorem~\ref{gr} their universal enveloping algebras
must be non-isomorphic. The number of isomorphism classes of
4-dimensional nilpotent Lie algebras
is 3. One of these algebras is abelian, the second has nilpotency class 2, and
the third has nilpotency class 3. Again, by Theorem~\ref{gr},
their universal enveloping algebras are pairwise non-isomorphic.
This proves Theorem~\ref{5main} for Lie algebras of dimension at most~4.

There are 9 isomorphism classes of nilpotent Lie algebras with dimension 5 and
they are listed at the beginning of Section~4 in~\cite{degraaf}.
To simplify notation, we denote the Lie algebra $L_{5,i}$ in De Graaf's list by
$L_i$.
Inspecting this list, we find that the sequence  $(\dim\lcs L1,\dim\lcs L2,\ldots)$
for these Lie algebras, after omitting the tailing zeros,
are  as follows: $(5)$, $(5,1)$, $(5,2,1)$, $(5,1)$, $(5,2,1)$,
$(5,3,2,1)$, $(5,3,2,1)$, $(5,2)$, $(5,3,2)$. Therefore Theorem~\ref{gr}
implies that if $U(L_i)\cong U(L_j)$ then either $i=j$, or $\{i,j\}=\{3,5\}$,
or $\{i,j\}=\{6,7\}$. The Lie algebras that are involved in these
possible isomorphisms are as follows:
\begin{eqnarray*}
L_3&=&\left<x_1,x_2,x_3;x_4;x_5\ |\ [x_1,x_2]=x_4,\ [x_1,x_4]=x_5\right>;\\
L_5&=&\left< x_1,x_2,x_3;x_4;x_5\ |\ [x_1,x_2]=x_4,\ [x_1,x_4]=x_5,\
[x_2,x_3]=x_5\right>;\\
L_6&=&\left<x_1,x_2;x_3;x_4;x_5\ |\ [x_1,x_2]=x_3,\ [x_1,x_3]=x_4,\ [x_1,x_4]=x_5,\ [x_2,x_3]=x_5\right>;\\
L_7&=&\left<x_1,x_2;x_3;x_4;x_5\ |\ [x_1,x_2]=x_3,\ [x_1,x_3]=x_4,\ [x_1,x_4]=x_5\right>.
\end{eqnarray*}
Products that are zero are omitted from the multiplication tables above.
For instance $[x_1,x_3]=0$ in $L_3$ and in $L_5$.
Note that the bases of the Lie algebras given above
are homogeneous.  Further, basis elements of different weights are separated by a semicolon. The
multiplication tables given in~\cite{degraaf} are somewhat different from the
ones above, as we changed the orders of certain basis elements in order to
work with homogeneous bases.
A possible source of confusion is that
the symbols $x_i$ are used to denote elements of different Lie algebras,
but we believe that using different letters or introducing a subscript or
superscript would unnecessarily complicate the notation.

\begin{lemma}\label{5-dim}
If $\F$ is a filed of characteristic not 2, then  $\Omega(L_3)/\Omega^4(L_3)\not\cong
\Omega(L_5)/\Omega^4(L_5)$ and $\Omega(L_6)/\Omega^5(L_6)\not\cong
\Omega(L_7)/\Omega^5(L_7)$; consequently $\W(L_3)\not\cong\W(L_5)$
and $\W(L_6)\not\cong\W(L_7)$.
Otherwise, $\W(L_3)\cong \W(L_5)$ and $\W(L_6)\cong \W(L_7)$.
\end{lemma}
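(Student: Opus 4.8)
The plan is to treat two regimes separately according to $\Char\F$.

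\emph{Characteristic $2$.} Here I will exhibit explicit isomorphisms. Define $\varphi\colon L_3\to\uea{L_5}$ on the homogeneous basis by $\varphi(x_1)=x_1$, $\varphi(x_2)=x_2$, $\varphi(x_3)=x_3+x_1^2$, $\varphi(x_4)=x_4$, $\varphi(x_5)=x_5$, and $\psi\colon L_7\to\uea{L_6}$ by $\psi(x_1)=x_1$, $\psi(x_2)=x_2+x_1^2$, $\psi(x_3)=x_3$, $\psi(x_4)=x_4$, $\psi(x_5)=x_5$. The first step is to verify that $\varphi$ and $\psi$ are Lie algebra homomorphisms; the only brackets that require work are those which vanish in the source but not the target, and they come out right because in characteristic $2$ the relevant cross terms reduce, modulo the extra relation $[x_2,x_3]=x_5$ of $L_5$ (respectively $L_6$), to the needed values --- e.g. $[x_2,x_1^2]=-2x_4x_1-x_5\equiv-x_5$, $[x_1^2,x_3]=x_1x_4+x_4x_1=2x_4x_1+x_5\equiv x_5$, and $[x_1^2,x_4]=x_1x_5+x_5x_1=2x_1x_5\equiv0$ --- so that $[\varphi(x_2),\varphi(x_3)]=[\varphi(x_3),\varphi(x_4)]=0$ (respectively $[\psi(x_2),\psi(x_3)]=[\psi(x_2),\psi(x_4)]=0$). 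By Lemma~\ref{Uni}, $\varphi$ and $\psi$ then extend to algebra homomorphisms $\uea{L_3}\to\uea{L_5}$ and $\uea{L_7}\to\uea{L_6}$, which carry augmentation ideals into augmentation ideals and are surjective since the generators are recovered (e.g. $x_3=\varphi(x_3)-\varphi(x_1)^2$, $x_2=\psi(x_2)-\psi(x_1)^2$). To upgrade surjectivity to an isomorphism I will pass to associated graded algebras for the filtrations $\{\W^t\}$: since $\varphi(x_i)$ and $\psi(x_i)$ have weight $\geq w(x_i)$, with equality on the weight-one components, these maps are filtered, and by Theorem~\ref{basis-w} --- together with the fact that $L_3,L_5$ share the sequence $(\dim\lcs Li/\lcs L{i+1})=(3,1,1)$ and $L_6,L_7$ share $(2,1,1,1)$ --- the induced graded maps are in each degree surjections of finite-dimensional spaces of equal dimension, hence isomorphisms; as $\bigcap_t\W^t(L)=0$, the original maps are then injective. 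This yields $\W(L_3)\cong\W(L_5)$ and $\W(L_6)\cong\W(L_7)$.

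\emph{Characteristic $\neq 2$, the pair $L_3,L_5$.} Here I will work in the finite-dimensional nilpotent algebras $A_i=\W(L_i)/\W^4(L_i)$, which have $A_i^4=0$; any algebra isomorphism $A_3\to A_5$ respects the square $A^2$ and the centre $Z(A)$, so it suffices to show $Z(A_3)\not\su A_3^2$ while $Z(A_5)\su A_5^2$. The first assertion is immediate: $x_3\in Z(L_3)$ is central in $\uea{L_3}$, so its image lies in $Z(A_3)$, and it has weight $1$, hence is not in $A_3^2$ by Theorem~\ref{basis-w}. For the second, take $c\in Z(A_5)$ and write $c=c_1+c_2+c_3$ by weight; comparing weight-$2$ components of $[c,x_1]=[c,x_2]=0$ forces $c_1=\lambda x_3$, and the weight-$3$ component of $[c,x_2]=0$ then gives $[c_2,x_2]=-[c_1,x_2]=\lambda x_5$. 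Expanding $[c_2,x_2]$ in the PBW basis, the monomials $x_1x_4$ and $x_5$ receive contributions only from the $x_1^2$-coefficient $a$ of $c_2$, namely $2a\,x_1x_4$ and $-a\,x_5$; comparing the $x_1x_4$-coefficients gives $2a=0$, hence $a=0$ since $\Char\F\neq 2$, and then the $x_5$-coefficients give $\lambda=0$. Thus $Z(A_5)\su A_5^2$, so $A_3\not\cong A_5$ and therefore $\W(L_3)\not\cong\W(L_5)$; the statement about $\W(L_3)$ versus $\W(L_5)$ follows because $\W^4$ is intrinsic.

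\emph{Characteristic $\neq 2$, the pair $L_6,L_7$ --- the main obstacle.} The previous trick does not apply verbatim: one has $\W(L_6)/\W^4(L_6)=\W(L_7)/\W^4(L_7)$ (the Lie algebras agree modulo $\lcs L4$), and a computation shows that $\W(L_6)/\W^5(L_6)$ and $\W(L_7)/\W^5(L_7)$ have isomorphic centres, so no coarse invariant will separate them. The distinction must come from the extra relation $[x_2,x_3]=x_5$ of $L_6$, equivalently $[x_2,[x_1,x_2]]=[x_1,[x_1,[x_1,x_2]]]$, which holds in $L_6$ but not in $L_7$ and which is invisible modulo $\W^4$ but becomes detectable modulo $\W^5$. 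Concretely, $\W(L_6)/\W^5(L_6)$ and $\W(L_7)/\W^5(L_7)$ are two extensions, with isomorphic associated graded algebras, of the common algebra $\W(L)/\W^4(L)$ by the trivial module $\W^4(L)/\W^5(L)$, and the plan is to show by an explicit PBW computation that their isomorphism types --- equivalently, their extension classes --- differ. Pinning down the precise intrinsic invariant that detects this difference, and in particular handling characteristic $3$ (where $x_2^3$ is central in both $\uea{L_6}$ and $\uea{L_7}$, so that several naive numerical invariants coincide and the argument must be pushed further), is the delicate heart of the lemma. Granting it, $\W(L_6)/\W^5(L_6)\not\cong\W(L_7)/\W^5(L_7)$, and hence $\W(L_6)\not\cong\W(L_7)$.
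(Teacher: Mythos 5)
Your treatment of the characteristic-$2$ isomorphisms (explicit maps $x_3\mapsto x_3+x_1^2$, $x_2\mapsto x_2+x_1^2$, extended via Lemma~\ref{Uni} and shown bijective by the filtration/dimension-count argument) and of the pair $L_3,L_5$ in characteristic $\neq 2$ (comparing $Z(B)$ with $B^2$ for $B=\W(L_i)/\W^4(L_i)$, with the $x_1^2$-coefficient killed by $2a=0$) is correct and is essentially identical to the paper's proof of those two parts.

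The genuine gap is the pair $L_6,L_7$ in characteristic $\neq 2$, which you yourself flag as ``the delicate heart of the lemma'' and then dispose of with ``Granting it''. Nothing in your third paragraph constitutes a proof: you correctly identify the responsible relation $[x_2,x_3]=x_5$ of $L_6$ and propose to compare extension classes of $\W/\W^5$ over $\W/\W^4$, but you neither define an intrinsic invariant that distinguishes the two extensions nor carry out the PBW computation, and you explicitly leave open how to handle characteristic $3$. Since this is the one part of the lemma where the center comparison fails, it cannot be waved through. The paper's argument here is direct and you should be able to complete your sketch along the same lines: assume $f:\W(L_7)/\W^5(L_7)\to\W(L_6)/\W^5(L_6)$ is an isomorphism; since the source is generated by $x_1,x_2$, the map is determined by $f(x_1),f(x_2)$ with leading coefficients $\alpha_i,\beta_i$ satisfying $\alpha_1\beta_2-\alpha_2\beta_1\neq 0$; the identity $[x_1,x_2,x_2]=0$ in $L_7$ forces, modulo $(B_6)^4$, that $\beta_1(\alpha_1\beta_2-\alpha_2\beta_1)x_4=0$, so $\beta_1=0$; then a finer computation of $f([x_1,x_2,x_2])$ modulo $(B_6)^5$ produces the term $-2\alpha_1\beta_2\beta_{1,1}x_1x_4$, whence $\beta_{1,1}=0$ as $\Char\F\neq 2$, and the coefficient of $x_5$ collapses to $-\alpha_1\beta_2^2\neq 0$, contradicting $f([x_1,x_2,x_2])=0$. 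Note that this works uniformly for all characteristics $\neq 2$, including $3$, so the special worry you raise about $x_2^3$ being central is a red herring for this particular pair. Without some such computation your proposal proves only part of the lemma.
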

\begin{proof}
Suppose that $\char\,\F\neq 2$ and  set $B_i=\Omega(L_i)/\Omega^4(L_i)$,
for $i=3, 5$. First we show that $B_3\not\cong B_5$.
We claim that
$Z(B_5)\leq (B_5)^2$ while $Z(B_3)\not\leq (B_3)^2$, which
will imply that $B_3\not\cong B_5$.
As $x_3\in Z(B_3)\setminus(B_3)^2$, the second assertion of the claim
is valid.
In order to prove the first assertion, let $w\in Z(B_5)$.
We write $w$ as a linear combination of PBW monomials:
$$
w=\sum_{i_1\leq\cdots\leq i_n}\alpha_{i_1,\ldots,i_n}x_{i_1}\cdots x_{i_n}.
$$
Then $w\equiv\alpha_1 x_1+\alpha_2x_2+\alpha_3x_3\pmod{(B_5)^2}$.
Since $w$ is a central element in $B_5$, we have $[x_1, w]=[x_2, w]=0$,
and so
\begin{align*}
0=[x_1, w]&\equiv\alpha_2[x_1, x_2]= \alpha_2 x_4\pmod{(B_5)^3},\\
0=[x_2, w]&\equiv\alpha_1[x_2, x_1]=- \alpha_1 x_4\pmod{(B_5)^3}.
\end{align*}
Hence, $\alpha_1=\alpha_2=0$.
Since $x_5\in \lcs{L_5}3$, we have
\begin{align*}
0=[x_2, w]&=\alpha_3[x_2, x_3]+\alpha_{1,1} [x_2, x_1^2]+\alpha_{1,2}[x_2, x_1x_2]+\alpha_{1,3}[x_2, x_1x_3] \\
&=\alpha_3x_5+\alpha_{1,1}(-2x_1x_4+x_5)-\alpha_{1,2}x_2x_4-\alpha_{1,3}x_3x_4.
\end{align*}
Since $\char  \F\neq 2$, we deduce that $\alpha_3=0$. Thus, $w\in(B_5)^2$ as claimed.

Now set $B_i=\Omega(L_i)/\Omega^5(L_i)$, for $i=6, 7$.  Suppose, to the contrary, that
 $f:B_7 \rightarrow B_6$ is an isomorphism. Since $B_7$
is generated by $x_1$ and $x_2$, the map $f$ is determined by
the images $f(x_1)$ and $f(x_2)$.
As above, let us write $f(x_1)$ and $f(x_2)$ as linear combinations of
PBW monomials:
\begin{align*}
f(x_1)&=\sum_{i_1\leq\cdots\leq i_n}\alpha_{i_1,\ldots,i_n}x_{i_1}\cdots x_{i_n},\\
f(x_2)&=\sum_{i_1\leq\cdots\leq i_n}\beta_{i_1,\ldots,i_n}x_{i_1}\cdots x_{i_n}.
\end{align*}
As
\begin{align}\label{f[x1,x2]}
f([x_1,x_2])\equiv(\alpha_1\beta_2-\alpha_2\beta_1)[x_1,x_2]= (\alpha_1\beta_2-\alpha_2\beta_1)x_3\pmod{(B_6)^3},
\end{align}
we obtain that $\alpha_1\beta_2-\alpha_2\beta_1\neq 0$. Equation (\ref{f[x1,x2]}) also gives
$$
0=f([x_1,x_2,x_2])\equiv \beta_1(\alpha_1\beta_2-\alpha_2\beta_1)[x_1,x_2,x_1]=
-\beta_1(\alpha_1\beta_2-\alpha_2\beta_1)x_4 \pmod{(B_6)^4}.
$$
Since $\alpha_1\beta_2-\alpha_2\beta_1\neq 0$, we must have $\beta_1=0$.
This implies that $\alpha_1\neq 0$ and $\beta_2\neq 0$.
Furthermore, modulo $(B_6)^4$, we have
\begin{multline*}
f([x_1,x_2])\equiv
\alpha_1\beta_2x_3+(\alpha_1\beta_3+\alpha_2\beta_{1,1}-\alpha_{1,1}\beta_2)x_4+\\
(\alpha_1\beta_{1,2}-2\alpha_2\beta_{1,1}+2\alpha_{1,1}\beta_2)x_1x_3+
(2\alpha_1\beta_{2,2}-\alpha_2\beta_{1,2}+\alpha_{1,2}\beta_2)x_2x_3.
\end{multline*}
Hence $f([x_1,x_2,x_2])$ is equal to the following:
\begin{multline*}
(-\alpha_1\beta_2^2+\alpha_1\beta_2\beta_{1,1})x_5
-2\alpha_1\beta_2\beta_{1,1}x_1x_4-\alpha_1\beta_2\beta_{1,2} x_2x_4+\beta_2(\alpha_1\beta_{1,2}-2\alpha_2\beta_{1,1}+2\alpha_{1,1}\beta_2)x_3x_3.
\end{multline*}
As $[x_1,x_2,x_2]=0$ in $\W(L_7)$, we must have $f([x_1,x_2,x_2])=0$.
Since  $\char(\F)\neq 2$ and $\alpha_1\beta_2\neq 0$, we get $\beta_{1,1}=0$.
Thus the coefficient
of $x_5$ in $f([x_1,x_2,x_2])$ is $-\alpha_1\beta_2^2$. This, implies that $
f([x_1,x_2,x_2])\neq 0$, which is a contradiction. Thus $B_6\not\cong B_7$, as claimed.

Let us now assume that $\char\,\F=2$, and prove $\W(L_3)\cong\W(L_5)$ and $\W(L_6)\cong \W(L_7)$.
Note that the map from $L_3$ to $\W(L_5)$ induced by  $x_1\mapsto x_1$, $x_2\mapsto x_2$,
$x_3\mapsto x_3+x_1^2$ is a Lie homomorphism. Hence, by Lemma~\ref{Uni},
it can be extended to
a homomorphism $\varphi$ from $\W(L_3)$ to $\W(L_5)$.
Since $x_1$, $x_2$, and $x_3+x_1^2$ form
a generating set for $\W(L_5)$, $\varphi$ is onto. We need to show that
$\varphi$ is injective. Since
$\dim \W(L_3)/\W^i(L_3) = \dim \W(L_5)/\W^i(L_5)$ (see Theorem~\ref{basis-w}),
the homomorphism $\varphi$ induces an isomorphism
$$
\varphi_i:\W(L_3)/\W^i(L_3) \to \W(L_5)/\W^i(L_5),
$$
 for every $i\geq 1$.
Let  $x$ be a non-zero element in $\W(L_3)$ such that $\varphi(x)=0$. By Theorem \ref{basis-w}, there exists a positive integer $i$  such that $x\in \W^{i-1}(L_3)\setminus  \W^i(L_3)$.
As $\varphi_i$ is an isomorphism, we get $\varphi_i(x)\neq 0$,
which is a contradiction. Hence $\varphi$ is injective, and so $\W(L_3)\cong \W(L_5)$.

One can show precisely the same way that  $x_1\mapsto x_1$, $x_2\mapsto x_2+x_1^2$ extends to an isomorphism from
$\W(L_7)$ to $\W(L_6)$.
\end{proof}

Lemma~\ref{5-dim} in combination with Lemma~\ref{augisom},
and the argument preceding it proves Theorem~\ref{5main}.

\section{6-dimensional nilpotent Lie algebras}\label{dim6}

The isomorphisms between universal enveloping algebras of nilpotent Lie
algebras of dimension~6 in characteristic different from~2 are described by the following
theorem.

\begin{theorem}\label{6main}
Let  $L$ and $K$ be nilpotent Lie algebras of dimension~$6$ over a field $\F$ of characteristic not 2. If  $\uea{L}\cong\uea{K}$, then one of the following must hold.
\begin{itemize}
\item[(i)] $L\cong K$.
\item[(ii)] $\char\,\F=3$; further $L$ and $K$ are isomorphic to one
of the following pairs of Lie algebras in~\cite[Section~5]{degraaf}:
$L_{6,6}$ and $L_{6,11}$;
$L_{6,7}$ and $L_{6,12}$; $L_{6,17}$ and $L_{6,18}$; $L_{6,23}$ and $L_{6,25}$.
\end{itemize}
\end{theorem}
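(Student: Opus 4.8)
The plan is to mirror the strategy described in the introduction: using De~Graaf's classification of 6-dimensional nilpotent Lie algebras over a field of characteristic not~2, first list the sequences $(\dim L^1,\dim L^2,\ldots)$ for all the algebras $L_{6,i}$ together with the isomorphism types of the associated graded algebras $\gr{L_{6,i}}$. By Theorem~\ref{gr}, an isomorphism $\uea L\cong\uea K$ forces $\gr L\cong\gr K$, so only finitely many candidate pairs $\{L_{6,i},L_{6,j}\}$ with $i\neq j$ survive this filter; these candidate pairs can be read off directly from the classification. For each surviving pair we must then decide whether $\uea{L_{6,i}}\cong\uea{L_{6,j}}$, equivalently (by Lemma~\ref{augisom}) whether $\W(L_{6,i})\cong\W(L_{6,j})$.

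For the negative direction, the technique is exactly as in Lemma~\ref{5-dim}: work in a suitable finite-dimensional quotient $B_i=\W(L_{6,i})/\W^t(L_{6,i})$ for a well-chosen truncation level $t$ (the nilpotency class plus one or two), write any hypothetical isomorphism $f\colon B_j\to B_i$ in terms of the images of a minimal generating set, expand $f$ of the defining Lie relations as linear combinations of PBW monomials using Theorem~\ref{basis-w}, and extract a contradiction from the coefficients. Lemma~\ref{L^i} controls the leading terms of $f$ (it must respect the filtration induced by the lower central series), which pins down the ``linear part'' of $f$ up to an invertible matrix; the higher-degree corrections are then constrained by matching the images of successive lower central series elements. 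The key invariants to look at are, as in dimension~5, the center $Z(B_i)$ and its position relative to $(B_i)^2$, and more refined numerical data such as $\dim\big(Z(B_i)\cap (B_i)^k\big)$; when $\char\F=3$ these particular invariants coincide for the four listed pairs, which is why the restriction appears. For the positive direction in characteristic~3, one exhibits, for each of the four pairs $\{L_{6,6},L_{6,11}\}$, $\{L_{6,7},L_{6,12}\}$, $\{L_{6,17},L_{6,18}\}$, $\{L_{6,23},L_{6,25}\}$, an explicit map on generators of the form $x_k\mapsto x_k + (\text{products of lower } x_i)$ that is a Lie homomorphism into the universal enveloping algebra of the partner, extend it by Lemma~\ref{Uni}, check surjectivity (the images generate), and deduce injectivity from the equality $\dim\W(L)/\W^i(L)=\dim\W(K)/\W^i(K)$ exactly as in the last paragraph of the proof of Lemma~\ref{5-dim}.

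I expect the main obstacle to be the sheer bookkeeping: there are on the order of twenty isomorphism classes in dimension~6, so the list of candidate pairs surviving the $\gr L$ test is long, and for each one the PBW expansion of $f$ of the relations involves many monomials and unknown coefficients. The genuinely delicate cases will be those where the graded algebras agree and the "cheap" invariants (center, lower central series dimensions, dimensions of $\W^i/\W^{i+1}$) also agree, so that the contradiction only emerges after pushing the expansion to a deeper truncation level; choosing the right $t$ and the right invariant for each such pair, and verifying that the characteristic hypothesis $\char\F\neq 2$ (and, for the four exceptional pairs, $\char\F=3$ exactly) is used precisely where needed, is where the real work lies. A secondary subtlety is ensuring that the explicit isomorphisms produced in characteristic~3 really are Lie homomorphisms: one must verify that all defining relations of the source Lie algebra are satisfied by the proposed images inside $\uea K$, which requires computing brackets of the correction terms and using $\char\F=3$ to cancel the stray coefficients.
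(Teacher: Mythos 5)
Your proposal correctly identifies the paper's strategy --- filter candidate pairs through $\gr L$ using Theorem~\ref{gr}, then settle each surviving pair either by PBW-coefficient computations in truncated augmentation ideals or by exhibiting explicit isomorphisms --- but it stops at the level of a plan and never executes the case analysis, which is where essentially all of the mathematical content of this theorem lives. You do not derive the list of candidate families (the paper finds six: $\{K_3,K_5,K_{10}\}$, $\{K_6,K_7,K_{11},K_{12},K_{13}\}$, $\{K_{14},K_{16}\}$, $\{K_{15},K_{17},K_{18}\}$, $\{K_{23},K_{25}\}$, $\{K_9,K_{24}(\varepsilon)\}$), and without that list the argument cannot begin. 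More importantly, the individual cases require genuinely different arguments that your generic recipe does not supply: family~(1) is settled by computing $\dim Z(\W_i/(\W_i)^4)$ (the three values $30$, $29$, $28$); $K_{13}$ is separated from the rest of family~(2) by passing to the quotients $K_i/K_i^4$, which are the $5$-dimensional algebras $L_3$ and $L_5$, and invoking the dimension-$5$ result; $\W_6\not\cong\W_7$ and $\W_{11}\not\cong\W_{12}$ need direct coefficient chases valid in every characteristic, while separating $\{6,7\}$ from $\{11,12\}$ uses centers of $\W_i/(\W_i)^5$ and is exactly where $\char\,\F\neq 3$ enters; and the parametric family $K_{24}(\varepsilon)$ requires showing that any isomorphism forces $\varepsilon_1=\varepsilon_2\gamma_3^2\delta^{-2}$, an identity with no analogue elsewhere in your outline. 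Saying ``choose the right truncation level and the right invariant for each pair'' names the problem rather than solving it.

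A secondary inaccuracy: you assert that in characteristic~$3$ one exhibits explicit isomorphisms for all four exceptional pairs. The theorem as stated only needs the forward (necessary-condition) direction, so this is not required; but note also that the paper only produces such isomorphisms for $\{L_{6,6},L_{6,11}\}$, $\{L_{6,7},L_{6,12}\}$ and $\{L_{6,17},L_{6,18}\}$ (via maps such as $x_3\mapsto x_3+x_1^3$), and leaves the pair $\{L_{6,23},L_{6,25}\}$ undecided in characteristic~$3$, since its non-isomorphism argument explicitly assumes $\char\,\F\neq 3$. Your claimed positive direction for that pair is therefore unsupported.
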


Throughout this section $\F$ denotes a field with characteristic different from 2.
The proof of Theorem~\ref{6main} is presented in this section.
De Graaf~\cite{degraaf} lists the isomorphism types of 6-dimensional
nilpotent Lie algebras over an arbitrary field $\F$ of characteristic different
from~2. De Graaf denotes these Lie algebras by $L_{6,i}$ or
$L_{6,i}(\varepsilon)$ where $1\leq i\leq 26$ and $\varepsilon$ is a field
element. To simplify notation and to distinguish
between the Lie algebras of Sections~\ref{dim5} and~\ref{dim6}
we will denote $L_{6,i}$ with $K_i$ and $L_{6,i}(\varepsilon)$ with
$K_i(\varepsilon)$.
Inspecting the list of Lie algebras, we obtain that the isomorphisms
among the graded Lie algebras associated with the lower central series of
these Lie algebras are as follows:

\begin{enumerate}
\item $\gr{K_{3}}\cong\gr{K_5}\cong \gr{K_{10}}$;\label{iso1}
\item $\gr{K_{6}}\cong\gr{K_7}\cong \gr{K_{11}}\cong\gr{K_{12}}\cong\gr{K_{13}}$; \label{iso2}
\item $\gr{K_{14}}\cong\gr{K_{16}}$;\label{iso3}
\item $\gr{K_{15}}\cong\gr{K_{17}}\cong \gr{K_{18}}$;\label{iso4}
\item $\gr{K_{23}}\cong\gr{K_{25}}$;\label{iso5}
\item $\gr{K_{24}(\varepsilon)}\cong\gr{K_{9}}, \mbox{ for all } \varepsilon\in\F$.\label{iso6}
\end{enumerate}
%

Let $L$ and $K$ be 6-dimensional  non-isomorphic nilpotent
Lie algebras over $\F$. If $\uea L\cong\uea K$ then, by Theorem~\ref{gr}, $L$ and $K$ must both occur in one of the lines~\eqref{iso1}--\eqref{iso6} above.
In Lemmas \ref{dim51}-\ref{lem-iso6} we examine the possible isomorphisms
between the universal enveloping algebras of the Lie algebras that
occur in one of the families above. Using Lemma \ref{augisom} we only examine
the possible isomorphisms between the augmentation ideals.

 As with the 5-dimensional Lie algebras, we change the multiplication tables of the
algebras presented in~\cite{degraaf} in order to work with homogeneous bases.
The bases elements of different weights are separated by  a semicolon. Further,
as in Section~\ref{dim5}, we omit products of the form
$[x_i,x_j]=0$  from the multiplication tables of the Lie algebras.
For $i=3,\ 5,\ 6,\ 7,\ 9,\ 10,\ 11,\ 12,\ 13,\ 14,\ 15,\ 16,\ 17,\
18,\ 23,\ 25$ we set $U_i=\uea{K_i}$ and $\Omega_i=\aug{K_i}$.
Further, let $U_{24}(\varepsilon)$
and $\Omega_{24}(\varepsilon)$ denote $\uea{K_{24}(\varepsilon)}$ and
$\aug{K_{24}(\varepsilon)}$, respectively.

\subsection{Family~\eqref{iso1}}

First we deal with the isomorphism $\gr{K_{3}}\cong\gr{K_5}\cong \gr{K_{10}}$
where
\begin{eqnarray*}
K_3&=&\left<x_1,x_2,x_3,x_4;x_5;x_6\ |\ [x_1,x_2]=x_5,\ [x_1,x_5]=x_6\right>;\\
K_5&=&\left<x_1,x_2,x_3,x_4;x_5;x_6\ |\ [x_1,x_2]=x_5,\ [x_1,x_5]=x_6,\
[x_2,x_3]=x_6\right>;\\
K_{10}&=&\left<x_1,x_2,x_3,x_4;x_5;x_6\ |\ [x_1,x_2]=x_5,\ [x_1,x_5]=x_6,\
[x_3,x_4]=x_6\right>.
\end{eqnarray*}

\begin{lemma}\label{dim51}
The algebras $\Omega_3$, $\Omega_5$, and $\Omega_{10}$ are pairwise non-isomorphic.
\end{lemma}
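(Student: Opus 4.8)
The plan is to distinguish the three algebras $\Omega_3$, $\Omega_5$, $\Omega_{10}$ by finding invariants of their low-degree quotients that behave differently. Following the pattern of Lemma~\ref{5-dim}, I would work modulo $\Omega^4(K_i)$, setting $B_i=\Omega_i/\Omega^4_i$ for $i=3,5,10$, and compare the images of the center $Z(B_i)$ against $(B_i)^2$, together with the ranks of various bilinear maps induced by commutator. The key structural difference is that in $K_3$ the generator $x_3$ (and $x_4$) are central, in $K_5$ only $x_4$ is central, while in $K_{10}$ none of $x_1,\dots,x_4$ is central; more precisely, the commutator map $\bar L\times\bar L\to L^2/L^3$ on the abelianization has rank (as an alternating form, or rather its radical has a dimension) that differs: the radical of this form has dimension $3$ for $K_3$ (spanned by $x_3,x_4$ and $x_5$ modulo $L^2$, wait---only $x_3,x_4$ are in the radical modulo $L^2$, giving codimension of the radical equal to $2$), whereas for $K_5$ and $K_{10}$ the corresponding form has a one-dimensional radical. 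Since by Lemma~\ref{L^i} any isomorphism $\Omega_i\to\Omega_j$ respects the filtration and hence induces isomorphisms on the associated graded pieces, these rank/radical data are genuine invariants and already separate $\Omega_3$ from the other two.

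To separate $\Omega_5$ from $\Omega_{10}$ --- the delicate case, since $K_5$ and $K_{10}$ have the same lower-central-series dimension vector and even isomorphic $\gr{}$ --- I would look one step deeper. The distinction is that in $K_5$ we have $[x_2,x_3]=x_6$ where $x_3$ is involved in no other bracket, so $x_6$ is "reached" both by the length-one bracket $[x_2,x_3]$ and by the iterated bracket $[x_1,[x_1,x_2]]$; in $K_{10}$, $x_6$ is reached by $[x_3,x_4]$ and by $[x_1,[x_1,x_2]]$, and crucially the pair $\{x_3,x_4\}$ involved in the length-one bracket producing $x_6$ is disjoint from the support $\{x_1,x_2\}$ of the iterated bracket. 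Concretely, I expect the right invariant to be extracted from the action of $Z(B_i)\cap(B_i)^2$ or from counting the dimension of $\{v\in \bar L : [v,\bar L]\subseteq L^3\}$ versus how that subspace sits relative to $L^2$: in $K_5$ the element $x_1$ has $[x_1,L]$ meeting $L^3=\langle x_6\rangle$, whereas the elements $x_3,x_4$ killing everything into $L^2$ behave differently in the two algebras. I would take a purported isomorphism $f:B_5\to B_{10}$ (or the reverse), expand $f(x_i)$ in PBW monomials as in the proof of Lemma~\ref{5-dim}, use $f([x_i,x_j])=[f(x_i),f(x_j)]$ together with the relations of $K_{10}$ (and the fact that $f$ respects $L^i+\Omega^{i+1}$ by Lemma~\ref{L^i}), and derive a contradiction from the coefficient of $x_6$ --- exactly the "the coefficient of $x_5$ is forced nonzero but must vanish" mechanism used twice in Lemma~\ref{5-dim}.

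More efficiently, I would first try to short-cut the whole thing with a single numerical invariant. A natural candidate: for the quotient $B_i=\Omega_i/\Omega^4_i$, consider the subspace $\{w\in B_i : wB_i + B_iw \subseteq (B_i)^3\}$, or the dimension of $Z(B_i) + (B_i)^2$, or the dimension of $Z(B_i)\cap (B_i)^2$ --- these are manifestly isomorphism invariants of $B_i$, and by Lemma~\ref{augisom} and Lemma~\ref{L^i} computing them suffices. I expect $\dim Z(B_3)$, $\dim Z(B_5)$, $\dim Z(B_{10})$ (or the analogous "second centralizer" quantities) to be three distinct numbers, which would finish the proof in one stroke; the computation is routine via the PBW basis of Theorem~\ref{basis-w}, truncating at weight $3$.

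The main obstacle I anticipate is the $\Omega_5$ versus $\Omega_{10}$ comparison: because these two Lie algebras are so similar (both have $x_6$ arising from one length-one and one length-two bracket, with the only difference being whether the "extra" bracket $[\cdot,\cdot]=x_6$ uses a variable already appearing in $[x_1,x_2]=x_5$), a crude dimension count on $B_i$ may well fail to separate them, and one may be forced into the explicit PBW-coefficient contradiction argument. If even $B_i=\Omega_i/\Omega_i^4$ does not distinguish $\Omega_5$ and $\Omega_{10}$, the fallback is to pass to $\Omega_i/\Omega_i^5$ and repeat the analysis there, which is guaranteed to work eventually since $K_5\not\cong K_{10}$ and, by the heuristic remark in the introduction, $\Omega_i/\Omega_i^{c+1}$ with $c$ the nilpotency class already determines the Lie algebra in these small cases. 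I would structure the write-up to handle $\Omega_3$ first (easy, via rank of the abelianized commutator form), then devote the bulk of the argument to the $\{\Omega_5,\Omega_{10}\}$ pair.
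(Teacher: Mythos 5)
Your ``more efficient shortcut'' in the third paragraph is precisely the paper's proof: setting $B_i=\Omega_i/(\Omega_i)^4$, a routine PBW computation gives $\dim Z(B_3)=30$, $\dim Z(B_5)=29$, $\dim Z(B_{10})=28$, so the single invariant $\dim Z(\Omega_i/(\Omega_i)^4)$ separates all three algebras in one stroke. Your worry that this count might fail to distinguish $\Omega_5$ from $\Omega_{10}$ is unfounded, and neither the PBW-coefficient contradiction argument nor the passage to $\Omega_i/(\Omega_i)^5$ is needed. (Morally the count works because $\dim Z(K_3)=3$, $\dim Z(K_5)=2$, $\dim Z(K_{10})=1$: each weight-one central generator of $K_i$ contributes one extra linear monomial and its quadratic products to $Z(B_i)$, dropping the dimension by one at each step from $K_3$ to $K_5$ to $K_{10}$.) Note, though, that your write-up only asserts that you \emph{expect} the three dimensions to differ; the computation, while routine, is the entire content of the proof and must actually be carried out.

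The argument in your first paragraph, however, is wrong and should be discarded. The commutator form $L/\lcs L2\times L/\lcs L2\to \lcs L2/\lcs L3$ and its radical are data of $\gr L$, and $\gr{K_3}\cong\gr{K_5}\cong\gr{K_{10}}$ --- that is exactly why these three algebras were grouped into one family that Theorem~\ref{gr} cannot resolve; no invariant of the associated graded algebra can separate them. Concretely, the extra brackets $[x_2,x_3]=x_6$ in $K_5$ and $[x_3,x_4]=x_6$ in $K_{10}$ take values in $\lcs L3$, hence vanish in $\lcs L2/\lcs L3$, so the graded form has radical $\langle x_3,x_4\rangle$ (rank $2$) in all three cases. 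The form that does distinguish the algebras is the one into $\lcs L2$ itself (equivalently, the center of the Lie algebra, of dimension $3$, $2$, $1$ respectively), but that is not a priori determined by $\uea L$ --- which is precisely why one must replace it by the manifestly invariant quantity $\dim Z(\Omega_i/(\Omega_i)^4)$.
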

\begin{proof}
For $i=3,\ 5,\ 10$, let $B_i=\W_i/(\W_i)^4$.
It is enough to prove that the centers $Z_i=Z(B_i)$ have different dimensions.
The quotient $B_i$ is spanned by the images of the
PBW monomials with weight at most 3 and
we will identify such a monomial with
its image.
We claim that $Z_3=\left<x_3,x_4,(B_3)^3\right>_{\F}$. Set
$C=\left<x_3,x_4,(B_3)^3\right>_{\F}$.
Clearly,  $C\leq Z_3$. Let $z\in Z_3$. Then $z$ is a linear
combination of PBW monomials with weight at most~3.
As usual, $\alpha_{i_1,\ldots,i_n}$ is  the coefficient of $x_{i_1}\cdots x_{i_n}$ in the PBW representation of $z$. We may assume without
loss of generality that all monomials in $C$ occur with coefficient zero.
First we compute that
$$
[x_1,z]=\alpha_2 x_5+\alpha_{1,2} x_1x_5+2\alpha_{2,2}x_2x_5+\alpha_{2,3} x_3x_5+\alpha_{2,4} x_4x_5+\alpha_{5}x_6.
$$
We deduce that $\alpha_2=\alpha_{1,2}=\alpha_{2,2}=\alpha_{2,3}=\alpha_{2,4}=\alpha_{5}=0$.
Now
$$
[z,x_2]=\alpha_1x_5+2\alpha_{1,1}x_1x_5-\alpha_{1,1} x_6+\alpha_{1,3}x_3x_5+\alpha_{1,4}x_4x_5.
$$
This implies that  $\alpha_1=\alpha_{1,1}=\alpha_{1,3}=\alpha_{1.4}=0$.
Therefore $Z_3=C$ as claimed. Since $x_3$, $x_4$, $x_3x_3$, $x_3x_4$, $x_4x_4$
together with the PBW monomials with weight~3 form a basis for $C$, we
obtain that $\dim C=\dim Z_3=30$.
Similar argument shows that $\dim Z_5=29$, and $\dim Z_{10}=28$.
Thus the $Z_i$ have different dimensions, as required.
\end{proof}

\subsection{Family \eqref{iso2}}
We examine the following family of Lie algebras.

\begin{eqnarray*}
K_6&=&\left<x_1,x_2,x_3;x_4;x_5;x_6\ |\ [x_1,x_2]=x_4,\ [x_1,x_4]=x_5,\ [x_1,x_5]=x_6,\ [x_2,x_4]=x_6\right>;\\
K_7&=&\left<x_1,x_2,x_3;x_4;x_5;x_6\ |\  [x_1,x_2]=x_4,\ [x_1,x_4]=x_5,\ [x_1,x_5]=x_6\right>;\\
K_{11}&=&\left<x_1,x_2,x_3;x_4;x_5;x_6\ |\ [x_1,x_2]=x_4,\ [x_1,x_4]=x_5,\ [x_1,x_5]=x_6,\ [x_2,x_4]=x_6,\right.\\
&&\left.[x_2,x_3]=x_6\right>;\\
K_{12}&=&\left<x_1,x_2,x_3;x_4;x_5;x_6\ |\ [x_1,x_2]=x_4,\ [x_1,x_4]=x_5,\ [x_1,x_5]=x_6,\ [x_2,x_3]=x_6\ \right>;\\
K_{13}&=&\left<x_1,x_2,x_3;x_4;x_5;x_6\ |\ [x_1,x_2]=x_4,\ [x_1,x_4]=x_5,\ [x_1,x_5]=x_6,\ [x_2,x_3]=x_5,\right.\\
&&\left. [x_4,x_3]=x_6\right>.
\end{eqnarray*}

\begin{lemma}\label{fam2lem}
The enveloping algebras $\W_6$, $\W_7$, $\W_{11}$, $\W_{12}$, and
$\W_{13}$ are pairwise non-isomorphic provided that $\char\,\F\neq 3$.
If $\char\,\F=3$ then $\W_6\cong \W_{11}$ and $\W_7\cong \W_{12}$ and there
is no more isomorphism among the algebras in this family.
\end{lemma}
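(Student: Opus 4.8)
The plan is to apply the same circle of ideas used in Lemmas~\ref{5-dim} and~\ref{dim51}: realize the claimed isomorphisms in characteristic~$3$ by lifting a suitable linear substitution on generators to an isomorphism of augmentation ideals via Lemma~\ref{Uni}, and rule out all the remaining isomorphisms by producing a filtration-respecting invariant on which the relevant algebras differ. Throughout I work with the homogeneous bases displayed above, so that the PBW monomials of weight at least~$t$ span $\augp {K_i}t$ by Theorem~\ref{basis-w}, and I use Lemma~\ref{augisom} to replace $\uea{\cdot}$ by $\W_{\cdot}$.

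First I would dispose of the characteristic-$3$ isomorphisms. The idea, exactly as in the proof of Lemma~\ref{5-dim}, is to find a linear change of the generators $x_1,x_2,x_3$ of $K_{11}$ (respectively $K_{12}$) of the form $x_i\mapsto x_i+(\text{higher-weight correction})$ — here the natural guess is to modify $x_3$ by a multiple of $x_1x_4$ or $x_1^2$ (weight~$2$ and~$3$ elements, respectively) so that the extra relations $[x_2,x_3]=x_6$ (and $[x_2,x_4]=x_6$ for $K_{11}$) are absorbed into the relations of $K_6$ — and check that the resulting assignment is a Lie homomorphism $K_6\to\W_{11}$ precisely when $2=-1$ in $\F$, i.e. $\char\,\F=3$. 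Once the map is a Lie homomorphism, Lemma~\ref{Uni} extends it to an algebra homomorphism $\W_6\to\W_{11}$; it is surjective because the images still generate, and it is injective by the graded-dimension argument already used in Lemma~\ref{5-dim}: the induced maps on $\W(K_6)/\W^i(K_6)\to\W(K_{11})/\W^i(K_{11})$ are surjections between spaces of equal finite dimension (Theorem~\ref{basis-w}), hence isomorphisms, so a nonzero kernel element, lying in some $\W^{i-1}\setminus\W^i$, would map to something nonzero, a contradiction. The pair $\W_7\cong\W_{12}$ is handled identically with the analogous correction term.

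Next I would prove non-isomorphism for all remaining pairs in the family, and also that the $\char\,3$ isomorphisms above are the only ones. The tool is the same as in Lemma~\ref{dim51}: pass to the finite-dimensional quotients $B_i=\W_i/(\W_i)^k$ for an appropriate small $k$ (here $k=4$ or $k=5$, chosen large enough to separate the algebras but small enough to keep the computation finite), and compare filtration invariants such as $\dim Z(B_i)$, $\dim Z(B_i)\cap (B_i)^2$, or $\dim\bigl(Z(B_i)+(B_i)^2\bigr)/(B_i)^2$. Concretely, writing a central element $z\in Z(B_i)$ in PBW form and imposing $[x_1,z]=[x_2,z]=[x_3,z]=0$ modulo the relevant power of the augmentation ideal forces the low-weight coefficients of $z$ to vanish — the computation of $[x_1,z]$ and $[x_2,z]$ in Lemma~\ref{dim51} is the model — and the number of surviving degrees of freedom differs across the five algebras (when $\char\,\F\neq 3$) and across the three isomorphism types $\{K_6\cong K_{11}\},\{K_7\cong K_{12}\},\{K_{13}\}$ (in any characteristic $\neq 2$). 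Since such dimensions are invariants of $\W_i$ by the filtration description, distinct values rule out $\W_i\cong\W_j$. If a single numerical invariant fails to separate two of the algebras, I would refine it: use $\dim\bigl((B_i)^2\cap Z(B_i)\bigr)$, or the dimension of the image of $Z(L_i)$-related subspaces under Lemma~\ref{L^i}, or the associated graded of the centre — one of these layered invariants will do the job.

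The main obstacle is the bookkeeping in the central-element computations: the quotients $B_i$ here have dimension well into the twenties or thirties, the brackets $[x_j,z]$ must be reduced to PBW normal form modulo $(B_i)^k$ using the (now longer) commutation relations, and one must be careful that the chosen $k$ is simultaneously large enough to detect the difference between, say, $K_6$ and $K_{13}$ and small enough that the quotient is finite-dimensional. In particular, for the pairs that become isomorphic in $\char\,3$ one must verify that the would-be invariant genuinely coincides in characteristic~$3$ (so no contradiction is reached there) while differing for $\char\,\F\neq 3$; this is the delicate point, and it is exactly where the factor of~$2$ from brackets like $[x_2,x_1^2]=-2x_1x_4+\cdots$ (cf.\ the displayed computation in Lemma~\ref{5-dim}) enters and makes characteristic~$3$ special. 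I expect the routine part — setting up PBW monomials, the surjectivity-plus-dimension argument for injectivity — to go through verbatim as in Lemmas~\ref{5-dim} and~\ref{dim51}, so the write-up would spend its effort on the explicit correction terms for the two positive constructions and on one carefully chosen separating invariant (and its refinements) for the negative statements.
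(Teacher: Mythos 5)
Your general toolkit (lift a corrected generator map via Lemma~\ref{Uni}, separate the remaining pairs by invariants of the nilpotent quotients $\W_i/(\W_i)^k$) is the right one, but two concrete steps of your plan would fail. First, the correction terms you guess for the characteristic-$3$ isomorphisms do not work: neither $x_3\mapsto x_3+c\,x_1^2$ nor $x_3\mapsto x_3+c\,x_1x_4$ can absorb the extra relation $[x_2,x_3]=x_6$ of $K_{11}$, because $[x_2,x_1^2]=-2x_1x_4+x_5$ and $[x_2,x_1x_4]=-x_4^2+x_1x_6$ contain no summand $x_6$ to cancel it in any characteristic. The correction that works is the cube: $[x_2,x_1^3]=-3x_1^2x_4+3x_1x_5-x_6$, which equals $-x_6$ precisely when $\char\,\F=3$, so the paper takes $x_1\mapsto x_1$, $x_2\mapsto x_2$, $x_3\mapsto x_3+x_1^3$ (and the same substitution for $\W_7\cong\W_{12}$). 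The special role of characteristic~$3$ thus comes from the coefficient $3$ in $[x_2,x_1^3]$, not from the factor $2$ you cite.

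Second, and more seriously, the dimension of the center of $B_i=\W_i/(\W_i)^5$ provably cannot separate $\W_6$ from $\W_7$, nor $\W_{11}$ from $\W_{12}$: the paper computes $Z(B_6)$ and $Z(B_7)$ to be spanned by the same monomials $x_3$, $x_3x_3$, $x_3x_3x_3$ together with $(B_i)^4$, so both have dimension $29$, and likewise $\dim Z(B_{11})=\dim Z(B_{12})=28$. The center only distinguishes the block $\{6,7\}$ from the block $\{11,12\}$, and only for $\char\,\F\neq 3$ (in characteristic~$3$ the element $x_3+x_1^3$ becomes central in $B_{11}$ and $B_{12}$, so the dimensions coincide). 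For $\W_6\not\cong\W_7$ and $\W_{11}\not\cong\W_{12}$ (valid in every characteristic $\neq 2$), and for $\W_6\not\cong\W_{12}$, $\W_7\not\cong\W_{11}$ in characteristic~$3$, the paper instead assumes an isomorphism $f$, expands $f(x_1),f(x_2),f(x_3)$ in PBW form, and derives a contradiction from relations such as $[x_1,x_2,x_2]=0$ in $K_7$ against $[x_2,x_4]=x_6$ in $K_6$; your fallback of ``refining the invariant'' does not supply such an argument and there is no evidence any center-based invariant would. Finally, $K_{13}$ is eliminated most cleanly not by a fresh center computation but by noting $K_i/K_i^4\cong L_3$ for $i\in\{6,7,11,12\}$ while $K_{13}/K_{13}^4\cong L_5$ and invoking Lemma~\ref{L^i} together with Lemma~\ref{5-dim}; this reduction is absent from your plan.
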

\begin{proof}
We claim, for $i\in\{6,\ 7,\ 11,\ 12\}$,  that $\W_{13}\not\cong \W_i$.
Note that $L_3\cong K_i/K_i^4$, while $L_5\cong K_{13}/K_{13}^4$,
where $L_3$ and $L_5$ are 5-dimensional Lie algebras defined in
Section~\ref{dim5}. Thus, $\W(L_3)/\W^5(L_3)\cong\Omega_i/(K_i^4+\Omega_i^5)$ and
$\W(L_5)/\W^5(L_5)\cong\Omega_{13}/(K_{13}^4+\Omega^5_{13})$.
Suppose that $f: \Omega_i\rightarrow \Omega_{13}$ is an isomorphism.
By Lemma~\ref{L^i}, $f(K_i^4+\Omega_i^5)=K_{13}^4+\Omega_{13}^5$. Hence, $f$
induces an isomorphism between $\Omega_{i}/(K_i^4+\Omega_i^5)$ and
 $\Omega_{13}/(K_{13}^4+\Omega_{13}^5)$.
Thus,
$$
\W(L_3)/\W^5(L_3)\cong \W(L_5)/\W^5(L_5).
$$
 However Lemma~\ref{5-dim} shows that $\W(L_3)/\W^4(L_3)\ncong \W(L_5)/\W^4(L_5)$. This contradiction implies that  $\Omega_{13}\not\cong\Omega_i$, as claimed.


Next we show that $\Omega_6\not\cong \Omega_7$ and that $\Omega_{11}\not\cong \Omega_{12}$.
First we argue that $\Omega_6\not\cong \Omega_7$. Suppose on the contrary that
$f:\Omega_7\rightarrow \Omega_6$ is an isomorphism. Then $f$ is
determined by the images $f(x_1)$, $f(x_2)$, $f(x_3)$.
Write $f(x_i)$ as a linear combination of PBW monomials and
let $\alpha_{i_1,\ldots,i_n}$, $\beta_{i_1,\ldots,i_n}$ and
$\gamma_{i_1,\ldots,i_n}$ denote the coefficients of $x_{i_1}\cdots x_{i_n}$
in $f(x_1)$, $f(x_2)$, $f(x_3)$, respectively.
Since $x_3\in Z(\Omega_7)$, we have
$$
0=[x_1,f(x_3)]\equiv\gamma_2x_4\bmod(\Omega_6)^3\quad\mbox{and}\quad
0=[x_2,f(x_3)]\equiv-\gamma_1x_4\bmod(\Omega_6)^3.
$$
Therefore $\gamma_1=\gamma_2=0$.
Let us compute modulo $(\Omega_6)^4$ that
$$
0=f([x_2,x_1,x_2])\equiv
\beta_1(\alpha_1\beta_2-\beta_1\alpha_2)x_5.
$$
If $\alpha_1\beta_2-\beta_1\alpha_2= 0$, then,
as $\gamma_1=\gamma_2=0$, it follows that $f(x_i)$ are linearly dependent modulo $(\Omega_6)^2$, which is
impossible. Thus $\beta_1=0$ and $\alpha_1\beta_2\neq 0$.
Now computation shows, modulo $(\Omega_6)^5$, that
\begin{multline*}
0=f([x_1,x_2,x_2])\equiv\\(-\alpha_1\beta_2^2+\alpha_1\beta_2\beta_{11})x_6-
2\alpha_1\beta_2\beta_{1,1}x_1x_5-\alpha_1\beta_2\beta_{1,2}x_2x_5-
\alpha_1\beta_2\beta_{1,3}x_3x_5+\delta\beta_2 x_4x_4
\end{multline*}
where $\delta=\alpha_1\beta_{1,2}-2\alpha_{2}\beta_{1,1}+2\alpha_{1,1}\beta_2$.
Considering the coefficients of $x_6$ and $x_1x_5$, we deduce that either $\alpha_1=0$ or $\beta_2=0$,
which is a contradiction.

Let us now show that $\Omega_{11}\not\cong \Omega_{12}$.
Assume by contradiction that $f:\Omega_{12}\rightarrow \Omega_{11}$ is
an isomorphism. Then $f$ is determined by the images
$f(x_1)$, $f(x_2)$, and $f(x_3)$.
As above, we write $f(x_i)$ as a linear combination of PBW monomials and
we let $\alpha_{i_1,\ldots,i_n}$, $\beta_{i_1,\ldots,i_n}$ and
$\gamma_{i_1,\ldots,i_n}$ denote the coefficients of $x_{i_1}\cdots x_{i_n}$
in $f(x_1)$, $f(x_2)$, $f(x_3)$, respectively.
Let us compute modulo
$(\Omega_{11})^3$ that
$$
0=f([x_1,x_3])\equiv(\alpha_1\gamma_2-\gamma_1\alpha_2)x_4
$$
and
$$
0\equiv f([x_2,x_3])\equiv(\beta_1\gamma_2-\beta_2\gamma_1)x_4.
$$
If the vector $(\gamma_1,\gamma_2)$ is not zero,
then the vectors $(\alpha_1,\alpha_2)$ and $(\beta_1,\beta_2)$
must be its scalar multiples and so the $f(x_i)$
are linearly dependent modulo $(\Omega_{11})^2$. Hence $\gamma_1=\gamma_2=0$.
Now we get a contradiction using the same argument as in the previous paragraph.

Now assume that $\char\,\F \neq 3$.
Set  $B_i=\Omega_i/(\Omega_i)^5$, for $i=6,\ 7,\ 11,\ 12$.
Let $Z_i$ denote the center of the $B_i$.
We claim that $\dim Z_6=\dim Z_7=29$ and $\dim Z_{11}=\dim Z_{12}=28$.
We only compute $\dim Z_7$, as the computation
of the other $Z_i$ is very similar.
We claim that
$$
Z_7=\left<x_3,x_3x_3,x_3x_3x_3,(B_7)^4\right>_{\F}.
$$
Set $C=\left<x_3,x_3x_3,x_3x_3x_3,(B_7)^4\right>_{\F}$. Clearly, $C\leq Z_7$.
Let $z\in Z_7$ and write $z$ as a linear combination of PBW monomials with
weight at most 4.
We may assume that all PBW monomials in $C$ occur with coefficient
$0$ in $z$. Let $\beta_{i_1,\ldots,i_k}$ be the coefficient of $x_{i_1}\cdots
x_{i_k}$. Then $\beta_3=\beta_{3,3}=\beta_{3,3,3}=0$.
First we obtain that
$$
0=[z,x_5]=\beta_1[x_1,x_5]=\beta_1x_6,
$$
and so $\beta_1=0$. Also, $[z,x_1^3]=-3\beta_2x_1^2x_4+3\beta_2x_1x_5-\beta_2x_6$. So, $\beta_2=0$.
Next compute that
\begin{multline*}
[z,x_1^2]=-2\beta_{1,2}x_1^2x_4-4\beta_{2,2}x_1x_2x_4-2\beta_{2,3}x_1x_3x_4
\\+
(\beta_{1,2}-2\beta_4)x_1x_5
+2\beta_{2,2}x_2x_5+\beta_{2,3}x_3x_5+2\beta_{2,2}x_4x_4+\beta_4x_6.
\end{multline*}
This
implies that $\beta_{1,2}=\beta_{2,2}=\beta_{2,3}=\beta_4=0$.
Further,
$$
[z,x_2^2]=4\beta_{1,1}x_1x_2x_4-2\beta_{1,1}x_2x_5-2\beta_{1,1}x_4x_4+
2\beta_{1,3}x_2x_3x_4.
$$
Thus $\beta_{1,1}=\beta_{1,3}=0$.
Then
\begin{multline*}
0=[z,x_1]=-\beta_{1,1,2}x_1x_1x_4-2\beta_{1,2,2}x_1x_2x_4-\beta_{1,2,3}
x_1x_3x_4-3\beta_{2,2,2}x_2x_2x_4-2\beta_{2,2,3}x_2x_3x_4\\
-\beta_{2,3,3}x_3x_3x_4
-\beta_{1,4}x_1x_5-\beta_{2,4}x_{4}^2-\beta_{2,4}x_2x_5-\beta_{3,4}x_3x_5-\beta_5x_6,
\end{multline*}
and so $\beta_{1,1,2}=\beta_{1,2,2}=\beta_{1,2,3}=\beta_{2,2,2}=
\beta_{2,2,3}=\beta_{2,3,3}=\beta_{1,4}=\beta_{2,4}=\beta_{3,4}=
\beta_5=0$.
Further
\begin{multline*}
0=[z,x_2]=3\beta_{1,1,1}x_1x_1x_4-3\beta_{1,1,1}x_1x_5+\beta_{1,1,1}x_6\\+2\beta_{1,1,3}x_1x_3x_4
-\beta_{1,1,3}x_3x_5+\beta_{1,3,3}x_3x_3x_4.
\end{multline*}
Hence $\beta_{1,1,1}=\beta_{1,1,3}=\beta_{1,3,3}=0$.
Thus $z\in C$, and so $C=Z_7$ as required. Similar
calculations show that
\begin{eqnarray*}
Z_6&=&\left<x_3,x_3x_3,x_3x_3x_3,(B_6)^4\right>_{\F};\\
Z_{11}&=&\left<x_3x_3,x_3x_3x_3,(B_6)^4\right>_{\F};\\
Z_{12}&=&\left<x_3x_3,x_3x_3x_3,(B_6)^4\right>_{\F}.
\end{eqnarray*}
Thus the dimensions of $Z_i$ are as claimed. Hence, if $\char\,\F \neq 3$ then,
 $\W_{6}\not\cong \W_{11},\ \W_{12}$ and  $\W_{7}\not\cong \W_{11},\ \W_{12}$.

Combining the results of the last two paragraph, we obtain that the algebras
$\W_6$, $\W_7$, $\W_{11}$, $\W_{12}$, $\W_{13}$
are pairwise non-isomorphic if $\char\,\F\neq 3$.

Now suppose that  $\char\,\F=3$.
We are required to show that $\W_{6}\not\cong \W_{12}$,
$\W_{7}\not\cong \W_{11}$, $\W_6\cong \W_{11}$ and that
$\W_7\cong \W_{12}$.
Suppose that $f:\Omega_{12}\rightarrow\Omega_6$ is an isomorphism. Write $f(x_2)$
and $f(x_4)$ as linear combinations of PBW monomials and
let
$\beta_{i_1,\ldots,i_n}$ and $\gamma_{i_1,\ldots,i_n}$ denote the coefficient
of $x_{i_1}\cdots x_{i_n}$ in $f(x_2)$ and $f(x_4)$ respectively.
By Lemma~\ref{L^i}, $f(x_4)\equiv\gamma_4 x_4\pmod{(\Omega_6)^3}$.
Then, modulo $(\Omega_6)^4$,
$0=f([x_2,x_4])\equiv\beta_1\gamma_4x_5$, which gives that $\beta_1=0$.
As $x_4=[x_1,x_2]$, we find that
$$
\gamma_{1,1,1}=\gamma_{1,1,2}=\gamma_{1,1,3}=
\gamma_{1,2,2}=\gamma_{1,2,3}=\gamma_{1,3,3}=\gamma_{2,2,2}=\gamma_{2,2,3}=
\gamma_{2,3,3}=\gamma_{3,3,3}=0.
$$
Then
$$
0=f([x_2,x_4])=(\beta_2\gamma_4-\beta_{1,1}\gamma_4)x_6-\beta_2\gamma_{1,4}x_4x_4+\beta_{1,2}\gamma_4x_2x_5+\beta_{1,3}\gamma_4x_3x_5-\beta_{1,1}\gamma_4 x_1x_5.
$$
This implies that $\beta_2\gamma_4=0$, and in turn that $\beta_2=0$.
However, this gives that $f(x_1),\ f(x_2),\ f(x_3)$ are linearly
dependent modulo $(\Omega_6)^2$, which is impossible. Hence
$\W_{6}\not\cong \W_{12}$, and very similar
argument shows that $\W_{7}\not\cong \W_{11}$.

Finally we need to show that if $\char\,\F=3$, then $\Omega_6\cong \Omega_{11}$ and $\Omega_7\cong \Omega_{12}$. The argument
presented in the proof of Lemma~\ref{5-dim}
shows that the map $x_1\mapsto x_1$,
$x_2\mapsto x_2$, and $x_3\mapsto x_3+x_1^3$ can be extended to
isomorphisms between $\Omega_6$ and $\Omega_{11}$ and between
$\Omega_7$ and $\Omega_{12}$.
\end{proof}

\subsection{Family~\eqref{iso3}}

\begin{eqnarray*}
K_{14}&=&\left<x_1,x_2;x_3;x_4;x_5;x_6\ |\ [x_1,x_2]=x_3,\ [x_1,x_3]=x_4,\
[x_1,x_4]=x_5, [x_2,x_5]=x_6\right.\\
&&\left.\ [x_3,x_4]=-x_6,\ [x_2,x_3]=x_5\right>;\\
K_{16}&=&\left<x_1,x_2;x_3;x_4;x_5;x_6\ |\ [x_1,x_2]=x_3,\ [x_1,x_3]=x_4,\
[x_1,x_4]=x_5,\ [x_2,x_5]=x_6,\right.\\
&&\left.[x_3,x_4]=-x_6\right>.
\end{eqnarray*}

\begin{lemma}\label{lem-iso3}
The algebras $\W_{14}$ and $\W_{16}$ are not isomorphic.
\end{lemma}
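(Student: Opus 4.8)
The plan is to reduce the statement to the five-dimensional case already settled in Lemma~\ref{5-dim}, in the same way that the non-isomorphism $\W_{13}\not\cong\W_i$ was obtained inside the proof of Lemma~\ref{fam2lem}. The key observation is that in each of $K_{14}$ and $K_{16}$ the basis element $x_6$ is the unique one of weight~$5$ and spans the fifth term of the lower central series, so deleting it from the multiplication table leaves exactly the presentation of a $5$-dimensional Lie algebra from Section~\ref{dim5}. Concretely, $K_{14}/K_{14}^5\cong L_6$ and $K_{16}/K_{16}^5\cong L_7$, where $L_6$ and $L_7$ are the two $5$-dimensional Lie algebras appearing there.

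Suppose, for contradiction, that $f\colon\Omega_{16}\to\Omega_{14}$ is an isomorphism. Lemma~\ref{L^i} with $i=5$ gives $f(K_{16}^5+\Omega_{16}^6)=K_{14}^5+\Omega_{14}^6$, so $f$ induces an isomorphism
$$
\Omega_{16}/(K_{16}^5+\Omega_{16}^6)\cong\Omega_{14}/(K_{14}^5+\Omega_{14}^6).
$$
Since $K_i^5$ has weight~$5$, the two-sided ideal it generates in $\uea{K_i}$ lies inside $K_i^5+\Omega_i^6$; factoring out that ideal and then $\Omega_i^6$ therefore identifies $\Omega_i/(K_i^5+\Omega_i^6)$ with $\W(K_i/K_i^5)/\W^6(K_i/K_i^5)$. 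Hence the displayed isomorphism becomes $\W(L_7)/\W^6(L_7)\cong\W(L_6)/\W^6(L_6)$, and passing to the further quotient by the image of $\W^5$ yields $\W(L_6)/\W^5(L_6)\cong\W(L_7)/\W^5(L_7)$. This contradicts Lemma~\ref{5-dim}, which applies since $\char\,\F\neq 2$ throughout this section; hence $\W_{14}\not\cong\W_{16}$.

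I do not anticipate a real obstacle: the argument is a transcription of the $\W_{13}$ case, and the only thing needing verification is the pair of Lie algebra isomorphisms $K_{14}/K_{14}^5\cong L_6$ and $K_{16}/K_{16}^5\cong L_7$, both immediate from the presentations. If a self-contained proof were preferred, one could instead put $B_i=\Omega_i/\Omega_i^5$ for $i=14,16$ and show that the centers $Z(B_{14})$ and $Z(B_{16})$ have different dimensions, in the style of Lemmas~\ref{dim51} and~\ref{fam2lem}; but the reduction above is shorter and reuses work already done.
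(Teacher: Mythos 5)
Your proof is correct and is essentially the paper's own argument: the paper also notes $K_{14}/(K_{14})^5\cong L_6$ and $K_{16}/(K_{16})^5\cong L_7$ and then invokes the reduction via Lemma~\ref{L^i} exactly as in the first paragraph of the proof of Lemma~\ref{fam2lem}, contradicting Lemma~\ref{5-dim}. Your write-up in fact spells out the identification of $\Omega_i/(K_i^5+\Omega_i^6)$ with $\W(K_i/K_i^5)/\W^6(K_i/K_i^5)$ a little more explicitly than the paper does, but the route is the same.
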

\begin{proof}
Note that $K_{14}$ and $K_{16}$ are algebras with maximal class. Further,
$K_{14}/(K_{14})^5 \cong L_6$ and $K_{16}/(K_{16})^5\cong L_7$.
Now the argument presented in the first paragraph of the proof
of Lemma~\ref{fam2lem} shows that $\W_{14}\not\cong \W_{16}$.
\end{proof}

\subsection{Family~\eqref{iso4}}
\begin{eqnarray*}
K_{15}&=&\left<x_1,x_2;x_3;x_4;x_5;x_6\ |\ [x_1,x_2]=x_3,\ [x_1,x_3]=x_4,\
[x_1,x_4]=x_5,\ [x_2,x_3]=x_5,\right.\\
&&\left.[x_1,x_5]=x_6,\ [x_2,x_4]=x_6\right>;\\
K_{17}&=&\left<x_1,x_2;x_3;x_4;x_5;x_6\ |\ [x_1,x_2]=x_3,\ [x_1,x_3]=x_4,\
[x_1,x_4]=x_5,\ [x_1,x_5]=x_6,\right.\\
&&\left.[x_2,x_3]=x_6\right>;\\
K_{18}&=&\left<x_1,x_2;x_3;x_4;x_5;x_6\ |\ [x_1,x_2]=x_3,\ [x_1,x_3]=x_4,\
[x_1,x_4]=x_5,\ [x_1,x_5]=x_6\right>.
\end{eqnarray*}

\begin{lemma}
If $\char\,\F\neq 3$ then $\W_{15}$, $\W_{17}$ and $\W_{18}$ are pairwise
non-isomorphic; otherwise the only isomorphism among
these algebras is $\W_{17}\cong \W_{18}$.
\end{lemma}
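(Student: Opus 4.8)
The plan is to handle the three possible non-isomorphisms $\W_{15}\not\cong\W_{17}$, $\W_{15}\not\cong\W_{18}$, $\W_{17}\not\cong\W_{18}$ one at a time, and then to construct the isomorphism $\W_{17}\cong\W_{18}$ in characteristic~$3$. The first two non-isomorphisms I would obtain exactly as in Lemma~\ref{lem-iso3}: inspection of the multiplication tables shows $K_{15}/(K_{15})^5\cong L_6$ while $K_{17}/(K_{17})^5\cong K_{18}/(K_{18})^5\cong L_7$, so the argument in the first paragraph of the proof of Lemma~\ref{fam2lem} applies and yields, via Lemma~\ref{L^i}, that an isomorphism $\Omega_{15}\cong\Omega_{17}$ or $\Omega_{15}\cong\Omega_{18}$ would force $\W(L_6)/\W^5(L_6)\cong\W(L_7)/\W^5(L_7)$, contradicting Lemma~\ref{5-dim}, whose relevant conclusion holds whenever $\char\,\F\neq2$. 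In particular this already establishes the characteristic~$3$ part of the statement, once $\W_{17}\cong\W_{18}$ is known.

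For $\W_{17}\not\cong\W_{18}$ when $\char\,\F\neq3$ I would use a PBW computation in the spirit of the $L_6$ versus $L_7$ argument of Lemma~\ref{5-dim} and the $\W_6$ versus $\W_7$ argument of Lemma~\ref{fam2lem}. Suppose $f\colon\Omega_{18}\to\Omega_{17}$ is an isomorphism, determined by $a=f(x_1)$ and $b=f(x_2)$, and write $a$, $b$ in the PBW basis with coefficients $\alpha_{i_1,\dots,i_n}$, $\beta_{i_1,\dots,i_n}$. Because $[x_2,[x_1,x_2]]=0$ in $K_{18}$, we have $[b,[a,b]]=0$ in $\Omega_{17}$, and I would reduce this identity modulo $\Omega_{17}^k$ for $k=3,4,5,6$. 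Modulo $\Omega_{17}^3$ one gets $[a,b]\equiv(\alpha_1\beta_2-\alpha_2\beta_1)x_3$, so $\delta:=\alpha_1\beta_2-\alpha_2\beta_1\neq0$ since $a$, $b$ generate; modulo $\Omega_{17}^4$ the identity becomes $0\equiv\delta\beta_1x_4$, so $\beta_1=0$ and hence $\alpha_1,\beta_2\neq0$; modulo $\Omega_{17}^5$ the coefficient of $x_1x_4$ equals $2\alpha_1\beta_2\beta_{1,1}$, so $\beta_{1,1}=0$ as $\char\,\F\neq2$; and modulo $\Omega_{17}^6$ the coefficient of $x_1^2x_4$ reduces to $3\alpha_1\beta_2\beta_{1,1,1}$, so $\beta_{1,1,1}=0$ as $\char\,\F\neq3$, after which the coefficient of $x_6$ collapses to $\alpha_1\beta_2^2\neq0$, contradicting $[b,[a,b]]=0$. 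With the previous paragraph this shows that $\W_{15}$, $\W_{17}$, $\W_{18}$ are pairwise non-isomorphic whenever $\char\,\F\notin\{2,3\}$.

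It remains to exhibit the isomorphism $\W_{17}\cong\W_{18}$ in characteristic~$3$. Following the method of Lemma~\ref{5-dim}, I would verify that $x_1\mapsto x_1$, $x_2\mapsto x_2-x_1^3$ sends the defining relations of $K_{18}$ to valid identities in $\Omega_{17}$; the key computation is that in characteristic~$3$ one has $[x_1^3,x_3]=x_6$ and $[x_1^3,x_4]=[x_1^3,x_5]=0$ in $\Omega_{17}$, whence $[x_2-x_1^3,x_3]=[x_2-x_1^3,x_4]=[x_2-x_1^3,x_5]=0$. By Lemma~\ref{Uni} this extends to a homomorphism $\Omega_{18}\to\Omega_{17}$, which is onto because $x_2=(x_2-x_1^3)+x_1^3$ is in its image, and injective because $\dim\Omega_{18}/\Omega_{18}^i=\dim\Omega_{17}/\Omega_{17}^i$ for all $i$ by Theorem~\ref{basis-w} (as $\gr{K_{17}}\cong\gr{K_{18}}$). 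Hence $\W_{17}\cong\W_{18}$, and together with the first paragraph this shows that in characteristic~$3$ this is the only isomorphism among the three algebras.

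The step I expect to be the main obstacle is the reduction modulo $\Omega_{17}^6$ in the second paragraph: expanding $[b,[a,b]]$ in the PBW basis requires tracking the reorderings of the various weight-$5$ monomials, and one must confirm both that the coefficient of $x_1^2x_4$ is precisely $3\alpha_1\beta_2\beta_{1,1,1}$ (so that $\char\,\F\neq3$ can be invoked to kill $\beta_{1,1,1}$) and that, once $\beta_1=\beta_{1,1}=\beta_{1,1,1}=0$, the coefficient of $x_6$ is exactly $\alpha_1\beta_2^2$ with no residual terms. The factor~$3$, and with it the need for the hypothesis on the characteristic, traces back to the identity $[x_1^3,x_3]=3x_1^2x_4-3x_1x_5+x_6$ in $\Omega_{17}$.
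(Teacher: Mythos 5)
Your proposal is correct, and its first and last parts coincide with the paper's proof: the non-isomorphisms $\W_{15}\not\cong\W_{17},\W_{18}$ are obtained exactly as in Lemma~\ref{lem-iso3}, by passing to the class-$4$ quotients $K_{15}/(K_{15})^5\cong L_6$ and $K_{17}/(K_{17})^5\cong K_{18}/(K_{18})^5\cong L_7$ via Lemma~\ref{L^i} and quoting Lemma~\ref{5-dim}; and the characteristic-$3$ isomorphism is the same map $x_2\mapsto x_2\mp x_1^3$ (the sign only reflects the direction of the map), justified by the same surjectivity-plus-dimension-count argument. Where you genuinely diverge is the core step $\W_{17}\not\cong\W_{18}$ for $\char\,\F\neq 3$. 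The paper takes $f\colon\W_{17}\to\W_{18}$, exploits the relation $[x_3,x_2]=-x_6$ of $K_{17}$, and organizes $f(x_2)$, $f(x_3)$ as $\sum x_1^iu_i$, $\sum x_1^iv_i$ with the $u_i,v_i$ free of $x_1$, using a weight analysis of the highest power of $x_1$ to force the contradiction $-\a_1^3\delta x_6\equiv \b_2x_3v_1-\delta x_4u_1$. You instead take $f\colon\W_{18}\to\W_{17}$, pull back the vanishing relation $[x_2,[x_1,x_2]]=0$ of $K_{18}$, and extract individual PBW coefficients modulo $\W_{17}^4$, $\W_{17}^5$, $\W_{17}^6$ in turn. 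I checked the three coefficient claims on which your argument rests: modulo $\W_{17}^4$ the only weight-$3$ term is $\b_1\delta x_4$; modulo $\W_{17}^5$ the coefficient of $x_1x_4$ is $2\delta\b_{1,1}$ (coming from $[x_1^2,x_3]=2x_1x_4-x_5$, no other source once $\b_1=0$); and modulo $\W_{17}^6$ the coefficient of $x_1^2x_4$ is $3\delta\b_{1,1,1}$ (from $[x_1^3,x_3]=3x_1^2x_4-3x_1x_5+x_6$, the candidates $[x_1^2,x_1x_3]$ and $[x_2,x_1^2x_3]$ contributing nothing after $\b_{1,1}=0$), after which the only surviving source of $x_6$ is $\b_2\delta[x_2,x_3]=\a_1\b_2^2x_6\neq 0$. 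So the computation closes. Your route is the natural continuation of the $B_6\not\cong B_7$ argument of Lemma~\ref{5-dim} one filtration level deeper, and it makes transparent why both $2$ and $3$ must be excluded (the binomial coefficients in $[x_1^2,x_3]$ and $[x_1^3,x_3]$); the paper's grouping by powers of $x_1$ buys a computation in which one never has to enumerate all the PBW coefficients of $f(x_2)$ and $f(x_3)$, at the cost of a more delicate bookkeeping of weights.
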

\begin{proof} Note that
$K_{15}/(K_{15})^5\cong L_6$ and $K_{17}/(K_{17})^5\cong L_7 \cong
K_{18}/(K_{18})^5$.
Applying the same method as in the proof of Lemma~\ref{fam2lem} yields that
$\W_{15}\not\cong \W_{17}$ and $\W_{15}\not\cong \W_{18}$.

Suppose that $\char\,\F\neq 3$, and let us show that $\W_{17}\not\cong \W_{18}$.
Assume, by contradiction, that $f: \W_{17}\to \W_{18}$ is an isomorphism.
Then $f$ is determined by the images $f(x_1)$ and $f(x_2)$.
Let $f(x_1)\equiv\a_{1}x_1 +\a_{2}x_2$ and $f(x_2)\equiv\b_{1}x_1 +\b _{2}x_2$  modulo $\W_{18}^2$.
Let $\delta=\a_1\b_2-\a_2\b_1$. Clearly, $\delta \neq 0$. Note that $f(x_3)\equiv\delta x_3 \pmod{\W_{18}^3}$.
Thus,  $f(x_4)\equiv\a_1\delta x_4 \pmod{\W_{18}^4}$ and  $f(x_6)\equiv\a_1^3 \delta x_6 \pmod{\W_{18}^6}$.
Also, $f([x_2, x_3])\equiv\b_1\delta x_4 \pmod{\W_{18}^4}$. We deduce that $\b_1=0$.
Now we write
\begin{align*}
f(x_2)& \equiv \b_2x_2+ u_0+x_1u_1+x_1^2u_2+x_1^3u_3 \pmod{\W_{18}^4},\\
f(x_3)&\equiv \delta x_3+ v_0+x_1v_1+x_1^2v_2+x_1^3v_3 \pmod{\W_{18}^4},
\end{align*}
where each $u_i$ and $v_i$ is a linear combination of (possibly trivial) PBW  monomials that do not involve $x_1$.
Since $f(x_3)\equiv \delta x_3 \pmod{\W_{18}^3}$, we deduce that weight of $v_0+x_1v_1+x_1^2v_2$ is at least 3. Similarly, weight of
$u_0+x_1u_1$ is at least 2. Note that $[u_0, v_i]=[u_i, v_0]=0$. So,
\begin{align}\label{brack-3-2}
0 \equiv f([x_3, x_2])&\equiv \b_2\sum_{i=1}^3 [x_1^i, x_2]v_i - \delta \sum_{j=1}^2[x_1^j, x_3]u_j \pmod{\W_{18}^5}.
\end{align}
Expanding out the commutators in Equation (\ref{brack-3-2}), we observe that $x_1^2x_3v_3$ is the unique term in Equation (\ref{brack-3-2}) that has  the highest exponent of $x_1$. Since $\char(\F)\neq 3$, this can happen only if $v_3\in \W_{18}$. Thus,
$x_1^2x_3v_3\in \W_{18}^5$.  The highest exponent of $x_1$ in Equation (\ref{brack-3-2}) then  appears in
$x_1x_3v_2$ and $x_1x_4u_2$. So, these terms have to cancel out with each other. We deduce that
$u_2\in \W_{18}$ and $v_2\in \W_{18}^2$.  Now Equation (\ref{brack-3-2}) reduces to
$\b_2 x_3v_1 \equiv\delta  x_4u_1 \pmod{\W_{18}^5}.$ This implies that $u_1\in \W_{18}^2$ and $v_1\in \W_{18}^3$.
So, if we  write
\begin{align*}
f(x_2)& \equiv\b_2x_2+ u_0+x_1u_1+x_1^2u_2+x_1^3u_3+x_1^4u_4 \pmod{\W_{18}^5},\\
f(x_3)& \equiv\delta x_3+ v_0+x_1v_1+x_1^2v_2+x_1^3v_3+x_1^4v_4 \pmod{\W_{18}^5},
\end{align*}
then each $u_i$ and $v_i$ is a linear combination of (possibly trivial) PBW  monomials that do not involve $x_1$,
$u_0$ has weight at least 2, $v_0$ has weight at least 3,  weight of $x_1v_1+x_1^2v_2+x_1^3v_3+x_1^4v_4$ is at least 4, and  weight of
$x_1u_1+x_1^2u_2$ is at least 3. Thus,
\begin{align}\label{brack-3-3}
-\a_1^3 \delta x_6=f([x_3, x_2])& \equiv\b_2\sum_{i=1}^4 [x_1^i, x_2]v_i - \delta \sum_{j=1}^3[x_1^j, x_3]u_j \pmod{\W_{18}^6}.
\end{align}
Arguing as in Equation (\ref{brack-3-2}), we deduce that Equation (\ref{brack-3-3}) reduces to the following:
\begin{align*}
-\a_1^3 \delta x_6& \equiv\b_2 x_3 v_1 - \delta x_4u_j  \pmod{\W_{18}^6}.
\end{align*}
The later is possible only if $\a_1=0$ or $\delta=0$. Since we have already proved that $\b_1=0$ it follows that
$\delta=0$ which is a contradiction.

The argument in the proof of Lemma~\ref{5-dim} shows, for $\char(\F)=3$, that
the map  $x_1\mapsto x_1$, $x_2\mapsto x_2+x_1^3$ can be extended to
 an isomorphism between $\W_{17}$ and $\W_{18}$.
\end{proof}

\subsection{Family~\eqref{iso5}}
\begin{eqnarray*}
K_{23}&=&\left<x_1,x_2,x_3;x_4,x_5;x_6\ |\ [x_1,x_2]=x_4,\ [x_1,x_4]=x_6,\
[x_1,x_3]=x_5,\ [x_2,x_3]=x_6\right>;\\
K_{25}&=&\left<x_1,x_2,x_3;x_4,x_5;x_6\ |\ [x_1,x_2]=x_4,\ [x_1,x_4]=x_6,\
[x_1, x_3]=x_5\right>.
\end{eqnarray*}

\begin{lemma}
The algebras  $\W_{23}$ and $\W_{25}$ are not isomorphic.
\end{lemma}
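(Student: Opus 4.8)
The plan is to argue by contradiction, assuming that $f\colon\W_{25}\to\W_{23}$ is an isomorphism (such an $f$ exists whenever $\W_{23}\cong\W_{25}$, by inverting an isomorphism in the other direction). Since $K_{25}$ is generated as a Lie algebra by $x_1,x_2,x_3$, the algebra $\W_{25}$ is generated by these elements, so $f$ is determined by $u=f(x_1)$, $v=f(x_2)$, $w=f(x_3)$; these generate $\W_{23}$, and as $f$ induces an isomorphism $\W_{25}/\W_{25}^2\to\W_{23}/\W_{23}^2$, the classes $\bar u,\bar v,\bar w$ form a basis of $\W_{23}/\W_{23}^2=\langle\bar x_1,\bar x_2,\bar x_3\rangle_{\F}$. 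As usual I would write $u,v,w$ as $\F$-linear combinations of PBW monomials in the homogeneous basis of $K_{23}$; say $\bar v=\beta_1\bar x_1+\beta_2\bar x_2+\beta_3\bar x_3$ and $\bar w=\gamma_1\bar x_1+\gamma_2\bar x_2+\gamma_3\bar x_3$ modulo $\W_{23}^2$.

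The first step uses the relation $[x_2,x_3]=0$ in $K_{25}$, which forces $[v,w]=0$ in $\W_{23}$. As $[x_1,x_2]=x_4$ and $[x_1,x_3]=x_5$ have weight $2$ while $[x_2,x_3]=x_6$ has weight $3$, reducing $[v,w]=0$ modulo $\W_{23}^3$ yields $\beta_1\gamma_2-\beta_2\gamma_1=0$ and $\beta_1\gamma_3-\beta_3\gamma_1=0$. Since the coefficient matrix of $\bar v,\bar w$ has rank $2$, its remaining $2\times2$ minor $\beta_2\gamma_3-\beta_3\gamma_2$ is nonzero, which forces $\beta_1=\gamma_1=0$; thus $\bar v,\bar w\in\langle\bar x_2,\bar x_3\rangle_{\F}$ and $\bar u$ has nonzero $x_1$-component $\alpha_1$. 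Next I would use that $x_5=[x_1,x_3]$ lies in $Z(K_{25})$, so it is central in $\W_{25}$, whence $f(x_5)=[u,w]$ is central in $\W_{23}$. Modulo $\W_{23}^3$ one computes $[u,w]\equiv\alpha_1\gamma_2\,x_4+\alpha_1\gamma_3\,x_5$; applying $[\,\cdot\,,x_1]$ and using Theorem~\ref{basis-w} (so $x_6$ cannot be cancelled by anything of weight $\geq 4$) gives $\alpha_1\gamma_2=0$, hence $\gamma_2=0$. Therefore $w\equiv\gamma_3x_3\pmod{\W_{23}^2}$ with $\gamma_3\neq0$, and then $\beta_2\neq0$ as well (since $\bar v,\bar w$ span $\langle\bar x_2,\bar x_3\rangle_{\F}$).

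For the last step I would extract the weight-$3$ component of $[v,w]=0$. Write $v=\beta_2x_2+\beta_3x_3+v'$ and $w=\gamma_3x_3+w'$ with $v',w'\in\W_{23}^2$. Expanding the commutator and reducing modulo $\W_{23}^4$, one finds that among all brackets of $x_2$ or $x_3$ with a weight-$2$ PBW monomial the only one feeding the monomials $x_6$ and $x_1x_4$ is $[x_2,x_1^2]=-2x_1x_4+x_6$; together with the contribution $\beta_2\gamma_3x_6$ from $[\beta_2x_2+\beta_3x_3,\gamma_3x_3]$, this gives that the coefficient of $x_1x_4$ in $[v,w]$ is $-2\beta_2q$ and the coefficient of $x_6$ is $\beta_2\gamma_3+\beta_2q$, where $q$ is the coefficient of $x_1^2$ in $w'$. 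As $[v,w]=0$, both vanish, so with $\char\,\F\neq2$ and $\beta_2\neq0$ we get $q=0$ and then $\beta_2\gamma_3=0$, contradicting $\beta_2\neq0\neq\gamma_3$. Hence no such $f$ exists and $\W_{23}\not\cong\W_{25}$.

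I expect the main obstacle to be the bookkeeping in this final step: one must push the commutator $[v,w]$ one filtration degree beyond the associated graded algebra and track enough PBW coefficients, which requires care with PBW reorderings such as $x_4x_1=x_1x_4-x_6$. The computation stays short, however, because all the relevant information is concentrated in the single bracket $[x_2,x_1^2]$ and collapses to two scalar equations.
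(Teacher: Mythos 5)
Your proposal is correct and follows essentially the same route as the paper's proof: derive $\beta_1=\gamma_1=0$ from the weight-$2$ part of $[f(x_2),f(x_3)]=0$, then $\gamma_2=0$ (the paper gets this from $f([x_1,x_5])=0$ rather than from centrality of $f(x_5)$, which is the same computation), and finally read off the coefficients of $x_1x_4$ and $x_6$ in the weight-$3$ part of $[f(x_2),f(x_3)]=0$ to force $\beta_2\gamma_3=0$, a contradiction. The key identity $[x_2,x_1^2]=-2x_1x_4+x_6$ and the use of $\char\,\F\neq 2$ appear identically in the paper.
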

\begin{proof}
Suppose that $\char\,\F\neq 3$ and assume, by contradiction, that  $f:\W_{25} \to \W_{23}$ is an isomorphism.
For $i=1,\ 2,\ 3$, write $f(x_i)$ as a linear combination of PBW
monomials and assume that $\alpha_{i_1,\ldots,i_n}$, $\beta_{i_1,\ldots,i_n}$,
and $\gamma_{i_1,\ldots,i_n}$ are the coefficients of $x_{i_1}\cdots x_{i_n}$
in $f(x_1)$, $f(x_2)$, and $f(x_3)$, respectively.
Then
$$
0=f([x_2, x_3])\equiv(\beta_1\gamma_2-\beta_2\gamma_1)x_4 +
(\beta_1\gamma_3-\beta_3\gamma_1)x_5 \pmod{\W_{23}^3},
$$
and hence $\beta_1\gamma_2-\beta_2\gamma_3=\beta_1\gamma_3-\beta_3\gamma_1=0$.
Since $f$ induces an isomorphism between $\Omega_{25}/(\Omega_{25})^2$
and
$\Omega_{23}/(\Omega_{23})^2$, we obtain that $\beta_1=\gamma_1=0$.
Note that
$$
f(x_5)=f([x_1, x_3])\equiv\alpha_1\gamma_2x_4+\a_1\gamma_3x_5 \pmod{\W_{23}^3}.
$$
Thus,
$0=f([x_1, x_5])\equiv\a_1^2\gamma_2x_6\pmod{(\W_{23})^4}$, which gives that
$\gamma_2=0$.
Now we calculate $f([x_2, x_3])$ modulo $(\W_{23})^4$
to show that $\beta_2\gamma_3=0$.
As $[x_4, \W_{23}]\leq (\W_{23})^4$, we have, modulo $(\W_{23})^4$,  that
\begin{multline*}
f([x_2, x_3])\equiv
\beta_2\gamma_3x_6+ \beta_2\gamma_{1,1}(x_6-2x_1x_4) -\beta_2\gamma_{1,2}
x_2x_4  -\beta_2\gamma_{1,3} x_3x_4\\
+2(\beta_{1,1}\gamma_3 -\beta_3\gamma_{1,1}) x_1x_5 + (\beta_{1,2}\gamma_3-\beta_3\gamma_{1,2})x_2x_5 +(\beta_{1,3}\gamma_3 -\beta_3\gamma_{1,3}) x_3 x_5.
\end{multline*}
This gives that $\beta_2\gamma_3=0$ which implies that the images
$f(x_2)$, $f(x_2)$, $f(x_2)$
are linearly dependent modulo $(\Omega_{23})^2$, which is a contradiction.
\end{proof}

\subsection{Family~\eqref{iso6}}
We consider the following Lie algebras:
\begin{eqnarray*}
K_{9}&=&\left<x_1,x_2,x_3;x_4;x_5,x_6\ |\ [x_1,x_2]=x_4,\ [x_1,x_4]=x_5,\ [x_2,x_4]=x_6\right>;\\K_{24}(\varepsilon)
&=&\left<x_1,x_2,x_3;x_4;x_5,x_6\ |\ [x_1,x_2]=x_4,\ [x_1,x_4]=x_5,\
[x_1,x_3]=\varepsilon x_6,\ [x_2,x_4]=x_6,\right.\\
&&\left.[x_2,x_3]=x_5\right>.\\
\end{eqnarray*}
The family $K_{24}(\varepsilon)$ is a parametric family of Lie
algebras such that $K_{24}(\varepsilon_1)\cong K_{24}(\varepsilon_2)$ if
and only if there is a $\nu\in\F$ such that
$\varepsilon_1\nu^2=\varepsilon_2$ (see~\cite{degraaf}).

\begin{lemma}\label{lem-iso6}
The algebras   $\W_9$ and $\W_{24}(\varepsilon)$ are not isomorphic, for all  $\varepsilon\in\F$.  Further,  $\W_{24}(\varepsilon_1)\cong \W_{24}(\varepsilon_2)$ if and only if $K_{24}(\varepsilon_1)\cong K_{24}(\varepsilon_2)$,
for every $\varepsilon_1,\ \varepsilon_2
\in\F$.
\end{lemma}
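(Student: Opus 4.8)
The lemma has two parts. The ``if'' direction of the parametric equivalence is immediate from Lemma~\ref{augisom}, so the real content is the nonisomorphism $\Omega_9\not\cong\Omega_{24}(\varepsilon)$ and the ``only if'' direction of the parametric statement. For both I would work with the filtration $\Omega\geq\Omega^2\geq\cdots$, which any algebra isomorphism of augmentation ideals preserves (each $\Omega^k$ being defined purely algebraically), together with its relation to the lower central series recorded in Lemma~\ref{L^i}.

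To prove $\Omega_9\not\cong\Omega_{24}(\varepsilon)$, I would compare the centres of $B_9:=\Omega_9/(\Omega_9)^4$ and $B_{24}(\varepsilon):=\Omega_{24}(\varepsilon)/(\Omega_{24}(\varepsilon))^4$: an isomorphism $\Omega_9\to\Omega_{24}(\varepsilon)$ carries $(\Omega_9)^4$ onto $(\Omega_{24}(\varepsilon))^4$ and so would induce $B_9\cong B_{24}(\varepsilon)$. Using homogeneous PBW bases (Theorem~\ref{basis-w}), split a central element of $B_i$ into its components of weight $1$, $2$ and $3$. The weight-$3$ component always lies in $(\Omega_i)^3/(\Omega_i)^4\subseteq Z(B_i)$, a subspace whose dimension (the number of PBW monomials of weight $3$) is the same in both algebras. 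Commutation with the generators $x_1,x_2,x_3$ then forces the weight-$2$ component to be a scalar multiple of $x_3^2$ in both cases, while it forces the weight-$1$ component to be a scalar multiple of $x_3$ in $B_9$ (because $x_3\in Z(K_9)$) and to vanish in $B_{24}(\varepsilon)$ (because $[x_2,x_3]=x_5\neq0$ there). Hence $\dim Z(B_9)=\dim Z(B_{24}(\varepsilon))+1$, which finishes this part.

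For the ``only if'' direction, let $f\colon\Omega_{24}(\varepsilon_1)\to\Omega_{24}(\varepsilon_2)$ be an isomorphism and write $\Omega=\Omega_{24}(\varepsilon_2)$. As $f$ induces an isomorphism $\Omega_{24}(\varepsilon_1)/(\Omega_{24}(\varepsilon_1))^2\to\Omega/\Omega^2$, I write $f(x_i)\equiv\sum_{j=1}^{3}m_{ij}x_j\pmod{\Omega^2}$ with $(m_{ij})$ invertible. Comparing the images of $[x_1,x_2]=x_4$ modulo $\Omega^3$ gives $f(x_4)\equiv d\,x_4\pmod{\Omega^3}$ with $d=m_{11}m_{22}-m_{12}m_{21}$, and $d\neq0$ because $f$ preserves $\Omega^3$. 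By Lemma~\ref{L^i}, both $f(x_5)$ and $f(x_6)$ lie in $(K_{24}(\varepsilon_2))^3+\Omega^4=\la x_5,x_6\ra_{\F}+\Omega^4$, which involves no PBW monomial of weight below $3$; comparing the coefficient of $x_4$ on the two sides of $[x_2,x_3]=x_5$ and of $[x_1,x_3]=\varepsilon_1x_6$ therefore forces $m_{31}=m_{32}=0$, and hence $m_{33}\neq0$. Using $x_5=[x_1,x_4]$, $x_6=[x_2,x_4]$ and the fact that $x_5,x_6$ are central in $\Omega$, one gets $f(x_5)\equiv d(m_{11}x_5+m_{12}x_6)$ and $f(x_6)\equiv d(m_{21}x_5+m_{22}x_6)$ modulo $\Omega^4$. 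Substituting these together with the expansions of $f(x_1),f(x_2),f(x_3)$ into $[x_2,x_3]=x_5$ and $[x_1,x_3]=\varepsilon_1x_6$ and comparing coefficients of the weight-$3$ PBW monomials: the coefficients of $x_1x_4,x_2x_4,x_3x_4$ eliminate all of the weight-$2$ part of $f(x_3)$ except the multiples of $x_4$ and of $x_3^2$ (this step uses $\char\,\F\neq2$), and the coefficients of $x_5,x_6$ yield
\[
A\,v = d\,u,\qquad A\,u = \varepsilon_1 d\,v,\qquad
A=\begin{pmatrix} c & m_{33}\\ m_{33}\varepsilon_2 & c\end{pmatrix},
\]
where $u=(m_{11},m_{12})$ and $v=(m_{21},m_{22})$ are read as column vectors and $c$ is the coefficient of $x_4$ in $f(x_3)$. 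Since $d\neq0$, the vectors $u$ and $v$ are linearly independent, so these identities force $A^2=\varepsilon_1 d^2 I$; its off-diagonal entries (with $\char\,\F\neq2$ and $m_{33}\neq0$) give $c=0$, and the diagonal entries then give $\varepsilon_2=\varepsilon_1(d/m_{33})^2$. Thus $\varepsilon_1\nu^2=\varepsilon_2$ with $\nu=d/m_{33}\neq0$, so $K_{24}(\varepsilon_1)\cong K_{24}(\varepsilon_2)$ by the criterion quoted from~\cite{degraaf}.

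The step I expect to be the main obstacle is the bookkeeping in this last computation: one has to track, modulo $\Omega^4$, the PBW reorderings occurring in commutators such as $[x_k,x_ix_j]$, and check that the extra unknowns introduced by the weight-$2$ part of $f(x_3)$ really are killed off by the $x_1x_4,x_2x_4,x_3x_4$ coefficients. What makes this manageable, and uniform in $\varepsilon_1,\varepsilon_2$, is that $x_5$ and $x_6$ are central in $\Omega$, so the higher-weight corrections to $f(x_1),\ldots,f(x_4)$ drop out of every relevant identity.
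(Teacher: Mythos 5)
Your proposal is correct and, for the substantive second part, follows essentially the same route as the paper: the same relations $[x_1,x_4]=[x_2,x_3]=x_5$ and $[x_1,x_3]=\varepsilon_1[x_2,x_4]$ compared modulo $\Omega^4$ produce exactly the paper's system of equations in the entries of $(m_{ij})$ and the coefficient $c$ of $x_4$ in $f(x_3)$. Your packaging of that system as $Av=du$, $Au=\varepsilon_1 dv$ with $A^2=\varepsilon_1 d^2 I$ is a genuine improvement over the paper's verification, which exhibits explicit multipliers $p_1,\dots,p_4$ certifying that $\varepsilon_1-\varepsilon_2\gamma_3^2\delta^{-2}$ lies in the ideal generated by the equations; your version both explains where the relation comes from and makes visible that $\nu=d/m_{33}$ is nonzero (via invertibility of $(m_{ij})$), a point the paper leaves implicit. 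For the first part the paper uses a slightly cleaner invariant -- whether $Z(\Omega)$ is contained in $\Omega^2$ (it is not for $\Omega_9$ since $x_3$ is central, and it is for $\Omega_{24}(\varepsilon)$) -- rather than your dimension count for $Z(\Omega/\Omega^4)$; both work, but note that killing the weight-one component of a central element of $\Omega_{24}(\varepsilon)$ is not quite as immediate as ``$[x_2,x_3]=x_5\neq 0$'': one must rule out cancellation against weight-two contributions such as $[x_2,x_1^2]=-2x_1x_4+x_5$, which is where $\char\,\F\neq 2$ enters (and which is precisely the mechanism behind the characteristic-two isomorphisms of Lemma~\ref{5-dim}). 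With that caveat made explicit, your argument is complete.
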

\begin{proof}
Since $x_3\in Z(\Omega_9)\setminus(\Omega_9)^2$, we have that
$Z(\Omega_9)\not\leq (\Omega_9)^2$. Let $\varepsilon\in\F$.
We claim that $Z(\Omega_{24}(\varepsilon))\leq (\Omega_{24}(\varepsilon))^2$.
Let $z\in Z(\Omega_{24}(\varepsilon))$ and write $z$ as a linear combination
of PBW monomials in which $\alpha_{i_1,\ldots,i_n}$ denotes the coefficient
of $x_1\cdots x_n$.
First we compute, modulo $(\Omega_{24}(\varepsilon))^3$, that
$0=[z, x_3]\equiv\alpha_1\varepsilon x_6+ \alpha_2 x_5$, which gives that
$\alpha_2=0$.
Then,
$$
[z,x_2]\equiv \alpha_1 x_4+(-\alpha_3-\alpha_{1,1})x_5+2\alpha_{1,1}x_1x_4+\alpha_{1,2}x_2x_4+\alpha_{1,3}x_3x_4 \pmod{\Omega^4_{24}(\varepsilon)},
$$
which shows that $\alpha_1=\alpha_3=0$. Hence $z\in (\Omega_{24}(\varepsilon))^2$,
and so $Z(\Omega_{24}(\varepsilon))\leq (\Omega_{24}(\varepsilon))^2$,
as claimed. This implies that $\Omega_9\not\cong \Omega_{24}(\varepsilon)$.

Let us now prove the second assertion of the lemma.
Without loss of generality, we assume that $\e_2\neq 0$.
Let $K=K_{24}(\e_2)$ and $\W=\W(K)$.
Suppose that $f: \Omega_{24}(\varepsilon_1)\to \W$ is an algebra isomorphism.
As usual, for $i=1,\ 2,\ 3$, we write the images $f(x_i)$ as linear
combinations of PBW monomials, and let $\alpha_{i_1,\ldots,i_n}$,
$\beta_{i_1,\ldots,i_n}$ and $\gamma_{i_1,\ldots,i_n}$ denote the
coefficients of $x_{i_1}\cdots x_{i_n}$ in $f(x_1)$, $f(x_2)$, and
$f(x_3)$, respectively.
Since $f([x_1, x_3]),\ f([x_2,x_3])\in \W^3$, we deduce that $\a_1\gamma_2-\a_2\gamma_1=0$ and that
$\beta_1\gamma_2-\beta_2\gamma_1=0$. Since the images
$f(x_1)$, $f(x_2)$, $f(x_2)$ are linearly independent
modulo $\Omega^2$, this gives that $\gamma_1=\gamma_2=0$.
Set $\delta=\a_1\beta_2-\a_2\beta_1$. Since $f(x_1)$, $f(x_2)$,
and $f(x_3)$ are linearly independent modulo $\Omega^2$,
we have $\delta\neq 0$. Further,
$$
f(x_4)=[f(x_1), f(x_2)]\equiv\delta[x_1, x_2]=\delta x_4 \pmod{\W^3}.
$$
Thus,  $f([x_1,x_4])\equiv\alpha_1\delta x_5+\alpha_2\delta x_6 \pmod{\W^4}$.
So, modulo $\W^4$, we have,
\begin{multline*}
f([x_2,x_3])\equiv(\beta_1\gamma_4+\beta_2\gamma_3+\beta_2\gamma_{1,1})x_5+
(\varepsilon_2\beta_1\gamma_3-\beta_1\gamma_{2,2}+\beta_2\gamma_{1,2}+\beta_2\gamma_4)x_6+\\
(\beta_1\gamma_{1,2}-2\beta_2\gamma_{1,1})x_1x_4+
(2\beta_1\gamma_{2,2}-\beta_2\gamma_{1,2})x_2x_4+
(\beta_1\gamma_{2,3}-\beta_2\gamma_{1,3})x_3x_4.
\end{multline*}
As $f([x_1,x_4])=f([x_2,x_3])$ we obtain that following equations:
\begin{eqnarray}
\label{11}\beta_1\gamma_4+\beta_2\gamma_3+\beta_2\gamma_{1,1}&=&\alpha_1\delta;\\
\label{12}\varepsilon_2\beta_1\gamma_3-\beta_1\gamma_{2,2}+\beta_2\gamma_{1,2}+\beta_2\gamma_4 &=& \alpha_2\delta;\\
\label{13}\beta_1\gamma_{1,2}-2\beta_2\gamma_{1,1}&=&0;\\
\label{14}2\beta_1\gamma_{2,2}-\beta_2\gamma_{1,2}&=&0;\\
\label{15}\beta_1\gamma_{2,3}-\beta_2\gamma_{1,3}&=&0.
\end{eqnarray}
Now we use the relation $\varepsilon_1[x_2,x_4]=[x_1,x_3]$. So, modulo $\W^4$, we have
\begin{align*}
f([x_2,x_4])\equiv& \beta_1\delta x_5+\beta_2\delta x_6,\\
f([x_1,x_3])\equiv& (\alpha_1\gamma_4+\alpha_2\gamma_3+\alpha_2\gamma_{1,1})x_5+
(\varepsilon_2\alpha_1\gamma_3-\alpha_1\gamma_{2,2}+\alpha_2\gamma_{1,2}+\alpha_2\gamma_4)x_6+\\
&(\alpha_1\gamma_{1,2}-2\alpha_2\gamma_{1,1})x_1x_4+
(2\alpha_1\gamma_{2,2}-\alpha_2\gamma_{1,2})x_2x_4+
(\alpha_1\gamma_{2,3}-\alpha_2\gamma_{1,3})x_3x_4.
\end{align*}
We get the following equations:
\begin{eqnarray}
\label{21}\alpha_1\gamma_4+\alpha_2\gamma_3+\alpha_2\gamma_{1,1} &=&
\varepsilon_1\beta_1\delta;\\
\label{22}\varepsilon_2\alpha_1\gamma_3-\alpha_1\gamma_{2,2}+\alpha_2\gamma_{1,2}+\alpha_2\gamma_4 &=& \varepsilon_1\beta_2\delta;\\
\label{23}\alpha_1\gamma_{1,2}-2\alpha_2\gamma_{1,1}&=&0;\\
\label{24}2\alpha_1\gamma_{2,2}-\alpha_2\gamma_{1,2}  & =& 0;\\
\label{25}\alpha_1\gamma_{2,3}-\alpha_2\gamma_{1,3}&=&0.
\end{eqnarray}
Equations~\eqref{13} and~\eqref{23} imply that
$\gamma_{1,1}=\gamma_{1,2}=0$. Similarly $\gamma_{2,2}=
\gamma_{2,3}=\gamma_{1,3}=0$. Thus the system of
equations above are reduced to the
following:
\begin{eqnarray}
\label{31}\beta_1\gamma_4+\beta_2\gamma_3&=&\alpha_1\delta;\\
\label{32}\varepsilon_2\beta_1\gamma_3+\beta_2\gamma_4 &=& \alpha_2\delta;\\
\label{33}\alpha_1\gamma_4+\alpha_2\gamma_3 &=&
\varepsilon_1\beta_1\delta;\\
\label{34}\varepsilon_2\alpha_1\gamma_3+\alpha_2\gamma_4 &=& \varepsilon_1\beta_2\delta.
\end{eqnarray}
Set
\begin{eqnarray*}
p_1&=&(-1/2)\varepsilon_1\beta_2\delta^{-2}-
(1/2)\varepsilon_2\alpha_1\gamma_3\delta^{-3};\\
p_2&=&(1/2)\alpha_1\gamma_4\delta^{-3} + \alpha_2\gamma_3\delta^{-3};\\
p_3&=&\alpha_2\delta^{-2} - (1/2)\beta_2\gamma_4\delta^{-3}\\
p_4&=&(-1/2)\alpha_1\delta^{-2} - (1/2)\beta_2\gamma_3\delta^{-3}.
\end{eqnarray*}
We can check that
\begin{multline*}
p_1(\beta_1\gamma_4+\beta_2\gamma_3-\alpha_1\delta)+p_2(\varepsilon_2\beta_1\gamma_3+\beta_2\gamma_4-\alpha_2\delta)+\\
p_3(\alpha_1\gamma_4+\alpha_2\gamma_3 -\varepsilon_1\beta_1\delta)+p_4(\varepsilon_2\alpha_1\gamma_3+\alpha_2\gamma_4 - \varepsilon_1\beta_2\delta)=\varepsilon_1 - \varepsilon_2\gamma_3^2\delta^{-2}.
\end{multline*}
Thus, considering that $\delta\neq 0$,
the equation $\varepsilon_1 = \varepsilon_2\gamma_3^2\delta^{-2}$ follows
from the equations~\eqref{31}--\eqref{34}. However this implies that
$K_{24}(\varepsilon_1)\cong K_{24}(\varepsilon_2)$ as claimed.
\end{proof}

\section*{Acknowledgments}
The second author would like to thank the  Centro de \'Algebra da Universidade de Lisboa for hosting his visit in April 2010 while this work was completed.


\end{document}